\theoremstyle{change}
\newtheorem{theo}{Theorem}[section]
\newtheorem*{theo*}{Theorem}
\newtheorem{prop}[theo]{Proposition}
\newtheorem{rem}[theo]{Remark}
\numberwithin{equation}{section}
\newcommand{\inte}{\mathrm{int}\,}  
\newcommand{\conv}{\mathrm{conv}\,} 
\newcommand{\vol}{\mathrm{vol}\,} 
\newcommand{\Kn}{{\mathcal K}^n} 
\newcommand{\Ksn}{{\mathcal K}_{(s)}^n} 
\newcommand{\Knull}{{\mathcal K}_{(o)}^n} 
\newcommand{\R}{\mathbb{R}} 
\newcommand{\Z}{\mathbb{Z}} 
\newcommand{\disuni}{\mathbin{\setbox0\hbox{$\bigcup$}\rlap{\copy0}\raise.3%
  \ht0\hbox to \wd0{\hfil$\cdot$\hfil}}}
\newcommand{\vu}{{\boldsymbol u}}
\newcommand{\vv}{{\boldsymbol v}}
\newcommand{\vx}{{\boldsymbol x}}
\newcommand{\vnull}{{\boldsymbol 0}}
\def\Kstw{\mathcal{K}_{(s)}^2}
\def\Solution{\frac{1}{3-\sqrt{3}}}
\def\Solutiontwo{\frac{\cos\frac{\pi}{5}-\sin\frac{\pi}{5}+\sin\frac{2\pi}{5}-\cos\frac{2\pi}{5}}{2\sin\frac{\pi}{5}}}
 \newcommand{\bnorm}[1]{\left\|#1\right\|}
\DeclareMathOperator{\St}{St}
\begin{document}
	
\title{On the lattice point covering problem in dimension 2}
\author{Fei Xue}
\address{Institut f\"ur Mathematik, Technische Universit\"at Berlin,
	Sekr. Ma 4-1, Strasse des 17 Juni 136, D-10623 Berlin, Germany}
\email{xue@math.tu-berlin.de}
\thanks{The research of the author was supported by a PhD scholarship of the Berlin
	Mathematical School}

\begin{abstract}
	In this paper we study the lattice point covering property of some regular polygons in dimension 2.
\end{abstract}

\maketitle

\section{Introduction}

Let $\Kn$ be the set of all convex bodies, i.e., compact convex sets, in the $n$-dimensional Euclidean space $\R^n$ with non-empty interior. 
We denote by $\Knull\subset \Kn$ the set of all  convex bodies, having the origin as an interior point, i.e., $\vnull\in\inte(K)$, and by $\Ksn\subset\Knull$ those bodies which are symmetric with respect to $\vnull$, i.e., $K=-K$.

We say that a convex body $K\in\Kn$ has the lattice point covering property, i.e., contains a lattice point of $\Z^n$ in any position, if each translation and rotation of $K$ contains at least a lattice point of $\Z^n$.

There are several beautiful results about the lattice point covering property.

\begin{theo}[Henk\&Tsintsifas,\cite{HeTs07}]
	Let $\mathcal{E}\subset\R^n$ be an ellipsoid with semi-axes $\alpha_i$, $1\leq i\leq n$. The following statements are equivalent:
	\begin{equation}
	\begin{split}
	(i)& \text{ $\mathcal{E}$ contains a lattice point of $\Z^n$ in any position,}\\
	(ii)& \text{ $\sum_{i=1}^{n}\frac{1}{\alpha_i^2}\leq 4$,}\\
	(iii)& \text{ $\mathcal{E}$ contains a cube of edge length $1$.}
	\end{split}
	\end{equation}
\end{theo}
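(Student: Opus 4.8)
The plan is to treat condition $(ii)$ as the central one and prove the two equivalences $(i)\Leftrightarrow(ii)$ and $(ii)\Leftrightarrow(iii)$. Write $\mathcal E$ in its principal frame as $\{x\in\R^n:x^\top Ax\le1\}$ with $A=\operatorname{diag}(\alpha_1^{-2},\dots,\alpha_n^{-2})$, so that $(ii)$ reads $\operatorname{tr}(A)=\sum_i\alpha_i^{-2}\le4$. The first step is to reformulate $(i)$: for a fixed rotation $R$, every translate of $R\mathcal E$ meets $\Z^n$ precisely when the lattice translates of $R\mathcal E$ cover $\R^n$, i.e. when $f_M(p):=\min_{z\in\Z^n}(p-z)^\top M(p-z)\le1$ for all $p\in\R^n$, where $M=RAR^\top$. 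Thus $(i)$ is equivalent to $\max_p f_{RAR^\top}(p)\le1$ for every $R$. The organizing observation is that $\operatorname{tr}(M)=\operatorname{tr}(A)$ for all orthogonal $R$, so $(ii)$ is a rotation-invariant hypothesis, and I expect all of $(i)\Leftrightarrow(ii)$ to reduce to the single inequality that for every positive definite $M$ and every $p$ there is a $z\in\Z^n$ with $(p-z)^\top M(p-z)\le\tfrac14\operatorname{tr}(M)$.

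For $(ii)\Rightarrow(i)$ I would establish that inequality by a probabilistic rounding argument. Given $p$, write $p_i=n_i+f_i$ with $n_i\in\Z$ and $f_i\in[0,1)$, and round each coordinate independently by setting $z_i=n_i+\sigma_i$, where $\sigma_i$ is Bernoulli equal to $1$ with probability $f_i$. Then $\mathbb E[p_i-z_i]=0$ and the coordinates are independent, so every off-diagonal contribution to $\mathbb E\big[(p-z)^\top M(p-z)\big]$ vanishes and one is left with $\sum_i M_{ii}\,f_i(1-f_i)\le\tfrac14\sum_iM_{ii}=\tfrac14\operatorname{tr}(M)$. Since the mean is at most $\tfrac14\operatorname{tr}(M)$, some realization $z$ satisfies $(p-z)^\top M(p-z)\le\tfrac14\operatorname{tr}(M)\le1$, which gives covering for every orientation and hence $(i)$. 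The converse $(i)\Rightarrow(ii)$ needs only one test configuration: in the principal frame ($M=A$) the point $p=(\tfrac12,\dots,\tfrac12)$ has $f_A(p)=\tfrac14\sum_i\alpha_i^{-2}$, and covering forces $f_A(p)\le1$, which is $(ii)$.

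For $(ii)\Leftrightarrow(iii)$ the implication $(ii)\Rightarrow(iii)$ is immediate: the axis-aligned cube of edge $2\big(\sum_i\alpha_i^{-2}\big)^{-1/2}\ge1$ has all vertices on $\partial\mathcal E$, so $\mathcal E$ contains a unit cube. For $(iii)\Rightarrow(ii)$ I would use a vertex-averaging identity. Let a unit cube with centre $c$ and orthonormal edge directions $u_1,\dots,u_n$ lie in $\mathcal E$; its $2^n$ vertices are $v_\sigma=c+\tfrac12\sum_i\sigma_iu_i$ with $\sigma\in\{-1,1\}^n$, each obeying $v_\sigma^\top Av_\sigma\le1$. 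Summing over all $\sigma$, the linear term drops out and the mixed terms $u_i^\top Au_j$ with $i\ne j$ cancel because $\sum_\sigma\sigma_i\sigma_j=0$, leaving $c^\top Ac+\tfrac14\sum_i u_i^\top Au_i\le1$. Since $U=[u_1,\dots,u_n]$ is orthogonal, $\sum_iu_i^\top Au_i=\operatorname{tr}(U^\top AU)=\operatorname{tr}(A)$, and discarding $c^\top Ac\ge0$ yields $\operatorname{tr}(A)\le4$, which is $(ii)$.

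The main obstacle is $(ii)\Rightarrow(i)$, because the rounding must succeed simultaneously for every orientation $M=RAR^\top$. Naive nearest-integer rounding is not enough: for a non-diagonal $M$ and a corner point $p$ with $p_i-z_i=\pm\tfrac12$, the deterministic choice produces $\tfrac14\sigma^\top M\sigma$, and there are sign patterns $\sigma\in\{-1,1\}^n$ with $\tfrac14\sigma^\top M\sigma>1$ even when $\operatorname{tr}(M)=4$. It is precisely the randomized independent rounding that replaces this worst sign pattern by the expectation and restores the clean trace bound. Everything else—the reformulation of $(i)$ as a covering statement and the two averaging computations—I expect to be routine once the rotation invariance of the trace is recognized as the guiding principle.
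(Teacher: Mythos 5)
The paper does not prove this statement at all: it is quoted in the introduction as a known result of Henk and Tsintsifas \cite{HeTs07}, so there is no internal proof to compare against. Your argument is correct and self-contained, and it follows essentially the same strategy as the original source: the key points are that $(i)$ for the orientation $M=RAR^\top$ is the covering condition $\min_{z\in\Z^n}(p-z)^\top M(p-z)\le 1$ for all $p$, that $\operatorname{tr}(RAR^\top)=\operatorname{tr}(A)$ makes the trace bound rotation-invariant, and that the inhomogeneous minimum of a positive definite form is at most $\tfrac14\operatorname{tr}(M)$ -- which your independent randomized rounding establishes cleanly, since the off-diagonal terms vanish in expectation and $M_{ii}f_i(1-f_i)\le\tfrac14 M_{ii}$ with $M_{ii}\ge 0$. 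The reverse implication via the test point $(\tfrac12,\dots,\tfrac12)$ in the principal frame, and the equivalence with $(iii)$ via the vertex-averaging identity $c^\top Ac+\tfrac14\operatorname{tr}(A)\le 1$, are both correct as written. I see no gap.
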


\begin{theo}[Niven\&Zuckerman,\cite{NiZu67}]
	A triangle with sides of lengths $a,b,c$, with $a\leq b\leq c$, has the lattice point covering property if and only if $2\Delta(c-1)\geq c^2$ where $\Delta$ is the area of the triangle.
\end{theo}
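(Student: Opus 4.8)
The plan is to translate the covering property into a statement about horizontal cross-sections. Since every lattice point of $\Z^2$ lies on a horizontal line $y=k$, $k\in\Z$, a placed copy $T$ of the triangle meets $\Z^2$ if and only if for some integer $k$ the horizontal slice $T\cap\{y=k\}$ contains a point with integer abscissa. For a fixed orientation let $w$ be the vertical width of $T$ and let $f(y)$ be the length of the slice at height $y$. Then $f$ is a ``tent'': it is piecewise linear, vanishes at the extreme vertices, and attains a single maximum $P$ (the chord through the middle vertex). Integrating slice length over height recovers the area, so $\Delta=\tfrac12\,wP$, i.e. $P=2\Delta/w$. The guiding observation is that a slice of length $\ge 1$ automatically contains an integer abscissa, so the crucial object is the interval $\{y:f(y)\ge 1\}$; a short computation with the tent shows that, when $P>1$, it has length $\lambda=w(P-1)/P$.

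For the sufficiency direction ($2\Delta(c-1)\ge c^2\Rightarrow$ covering) I would proceed as follows. Writing $\phi(t)=2\Delta(t-1)-t^2$, the hypothesis is exactly $\phi(c)\ge0$, and an elementary manipulation (both reduce to $ch_c\ge c+h_c$, where $h_c=2\Delta/c$ is the altitude to the longest side) shows this is equivalent to $\phi(h_c)\ge0$. As the orientation varies, the vertical width $w$ of a triangle lies in the interval $[h_c,c]$, since the minimal width of a triangle equals its least altitude $h_c$ and the maximal width equals its diameter $c$. Because $\phi$ is concave, $\phi(h_c)\ge0$ and $\phi(c)\ge0$ force $\phi\ge0$ on all of $[h_c,c]$, hence $\phi(w)\ge0$ for every orientation. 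Unwinding, $\phi(w)\ge0$ yields $P\ge w/(w-1)>1$ and therefore $\lambda\ge1$: the interval $\{y:f(y)\ge1\}$ has length at least $1$ and so contains an integer $k$ for every vertical position of $T$. The slice at that line has length $\ge1$, hence an integer abscissa, for every horizontal position. Thus no placement can avoid $\Z^2$.

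For the necessity direction I would exhibit a lattice-point-free placement whenever $2\Delta(c-1)<c^2$. If $h_c<1$, orient the minimal-width direction vertically and slide $T$ into the open strip $\{0<y<1\}$, which contains no lattice point. Otherwise lay the longest side horizontally at a small height $y=\epsilon>0$ with the apex above. Because the longest side is opposite the largest angle, both base angles are acute, so the foot of the apex altitude falls inside the base and the integer slices are nested, the one at $y=1$ being the widest, of length $c(h_c+\epsilon-1)/h_c$. The failure of the inequality is exactly what makes this length less than $1$ for all small $\epsilon$; a horizontal shift then frees the $y=1$ slice of integer abscissae, and by nestedness every higher slice is freed as well, so $T$ avoids $\Z^2$.

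The main obstacle, and where I expect to spend the most care, is the pair of purely geometric facts feeding the argument: that the slice-length profile is the tent with $\Delta=\tfrac12 wP$, and that the vertical width always lies in $[h_c,c]$ with both endpoints attained. Equally delicate are the boundary conventions: because $T$ is closed, a slice of length exactly $1$ still contains an integer, which is what makes the inequality non-strict and places the equality case $2\Delta(c-1)=c^2$ on the covering side. Finally I would dispose separately of the degenerate regime $c\le1$, where the hypothesis can never hold and the strip construction always applies.
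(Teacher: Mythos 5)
The paper does not prove this statement at all --- it is quoted as background from Niven--Zuckerman \cite{NiZu67} --- so there is no in-paper argument to compare against; judged on its own, your proof is correct and complete in outline. The two directions fit together exactly: writing $h_c=2\Delta/c$ for the least altitude, the condition $2\Delta(c-1)\ge c^2$ is equivalent to $ch_c\ge c+h_c$, hence to $\phi(h_c)\ge 0$ and $\phi(c)\ge 0$ simultaneously, and concavity of $\phi(t)=2\Delta(t-1)-t^2$ then gives $\phi(w)\ge 0$ for every attainable vertical width $w\in[h_c,c]$; your identities $P=2\Delta/w$ and $\lambda=w(P-1)/P$ check out, and $\phi(w)\ge0$ forces $w>1$ so the formula for $\lambda$ is legitimate. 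The converse construction is also exactly complementary: the strip placement handles $h_c<1$, and when $h_c\ge 1$ the width of the $y=1$ slice, $c(h_c+\epsilon-1)/h_c$, drops below $1$ for small $\epsilon$ precisely when $ch_c<c+h_c$, while acuteness of both base angles (automatic since they are the two smaller angles) gives the nesting of the higher integer slices. The two geometric inputs you flag --- that the slice profile is a tent of peak $2\Delta/w$ and that the minimal width of a triangle is its least altitude --- are standard and true, though the latter deserves the one-line rotating-calipers justification you promise. It is worth noting that your criterion ``the set $\{y:f(y)\ge 1\}$ has length $\ge 1$'' is exactly the translation-invariant, non-symmetric analogue of the mechanism this paper uses in its Theorem \ref{theo:cube} (Steiner symmetrization containing $[-\frac12,\frac12]^2$), so your argument is very much in the spirit of the paper's own method for the regular polygons, adapted to a body without central symmetry.
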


\begin{theo}[Niven\&Zuckerman,\cite{NiZu67}]
	Let $a$ and $b$ be the distances between the pairs of opposite sides, say with $a\leq b$, of a parallelogram $ABCD$ with an interior angle $\gamma\leq\pi/2$. The parallelogram has the lattice point covering property if and only if $a\geq 1$ and one of the following conditions holds:\\
	(i) $b\geq\sqrt{2}$;\\
	(ii) $b\leq\sqrt{2}$ and $\alpha+\beta+\gamma\leq\pi/2$, where $\alpha=\arccos(a/\sqrt{2})$ and $\beta=\arccos(b/\sqrt{2})$.
\end{theo}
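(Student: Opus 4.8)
The plan is to translate the covering property into a statement about placing $P$ against $\Z^2$ and then to reduce everything to a finite angular optimization driven by the four lattice points nearest a deep hole. Writing $R_\theta$ for rotation by $\theta$, the body $P$ contains a lattice point in every position iff for every $\theta$ the translates $R_\theta\Z^2+P$ cover $\R^2$; since $P=-P$ after centering, this is equivalent to saying that for every rotation of the lattice, every translate of that rotated lattice meets $P$. Dually, $P$ \emph{fails} the property iff it can be rotated and translated to avoid $\Z^2$, i.e. some translate of a rotated $\Z^2$ misses $P$. I would first dispose of the condition $a\ge 1$: if $a<1$, rotate $P$ so that the narrower pair of sides is horizontal; then $P$ lies in a horizontal strip of width $a<1$, which can be slid strictly between two consecutive lattice lines $\{y=k\}$ and $\{y=k+1\}$, and that strip contains no point of $\Z^2$ no matter how far $P$ extends horizontally. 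Hence $a\ge 1$ is necessary, and I assume it henceforth.

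The geometric heart is to view $P$ as the intersection $S_a\cap S_b$ of two slabs of widths $a\le b$ whose unit normals $n_2,n_1$ enclose the angle $\gamma$. The natural extremal placement centers $P$ at a cell center (a deep hole) of a suitably rotated $\Z^2$, so that the four nearest lattice points sit at the cell corners, at distance $1/\sqrt 2$ and along two perpendicular directions. A short computation shows that such a corner lies inside the width-$a$ slab iff the angle between its direction and $n_2$ is at least $\alpha=\arccos(a/\sqrt 2)$, and inside the width-$b$ slab iff its angle to $n_1$ is at least $\beta=\arccos(b/\sqrt 2)$. Consequently $P$ avoids all four corners iff each of the two perpendicular corner directions lies within $\alpha$ of the $n_2$-axis or within $\beta$ of the $n_1$-axis. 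Since these axes are rigidly linked at angle $\gamma$ and the two target directions are $\pi/2$ apart, an interval-overlap analysis shows that both corners can be avoided iff $\alpha+\beta+\gamma\ge \pi/2$. This produces the threshold in (ii); and when $b\ge\sqrt 2$ the width-$b$ slab engulfs all four corners ($\beta$ degenerating to $0$), leaving avoidance impossible once $a>1$, which gives (i).

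Assembling the equivalence, for sufficiency I would show that under $a\ge 1$ together with (i) or (ii) no rotation lets $P$ dodge even the four nearest lattice points, so some lattice point always lies in $P$; for necessity I would, when $b\le\sqrt 2$ and $\alpha+\beta+\gamma>\pi/2$, use the interval analysis to exhibit the avoiding rotation together with the cell-center translate. The main obstacle is precisely the passage between ``avoids the four nearest points'' and ``avoids all of $\Z^2$'': the four-corner reduction is transparent for sufficiency, but for necessity one must certify that the placement engineered to miss the nearest shell also misses every farther lattice point. This is delicate exactly for thin parallelograms (small $\gamma$), where $P$ can reach far along its long axis even though its widths are at most $\sqrt 2$; controlling those distant points, together with the bookkeeping of the boundary cases $\alpha+\beta+\gamma=\pi/2$ and $b=\sqrt 2$ (open versus closed slabs, where closedness of $P$ forces the boundary corners to count), is where the real work lies. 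I expect the width bound $b\le\sqrt 2$ and convexity to be exactly what confines the dangerous lattice points to the nearest shell and closes the argument.
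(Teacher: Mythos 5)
This statement is quoted from Niven--Zuckerman \cite{NiZu67}; the paper gives no proof of it, so your attempt can only be judged on its own terms. Your two correct ingredients are the strip argument showing $a\geq 1$ is necessary, and the angular computation: a point at distance $\tfrac{1}{\sqrt{2}}$ from the centre of $P$ lies in the width-$a$ slab iff its direction makes angle at least $\alpha=\arccos(a/\sqrt{2})$ with that slab's normal, and the triangle inequality on directions (the two corner axes are $\pi/2$ apart, the two slab-normal axes are $\gamma$ apart) does produce the threshold $\alpha+\beta+\gamma$ versus $\pi/2$. That is the right quantity and the right mechanism for why it appears.

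There are, however, two genuine gaps, and only one of them is the one you flag. The one you acknowledge is necessity: a placement engineered to miss the four corners of one cell must be certified to miss all of $\mathbb{Z}^2$; you state a hope that $b\leq\sqrt{2}$ and convexity confine the danger to the nearest shell, but give no argument, and for thin parallelograms (small $\gamma$, long sides) this is exactly where the difficulty sits. The gap you do not acknowledge is sufficiency: your entire four-corner analysis presupposes that $P$ is centred at a cell centre, so that the four nearest lattice points are equidistant at $\tfrac{1}{\sqrt{2}}$ in two perpendicular directions. The lattice point covering property quantifies over \emph{all} translates; for an arbitrary translate the four nearest lattice points are neither equidistant nor perpendicular, and "$P$ contains a lattice point whenever centred at a cell centre'' is strictly weaker than "$P+\mathbb{Z}^2=\mathbb{R}^2$'' --- for centrally symmetric bodies in general the deepest hole of the covering need not be the cell centre. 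To close this you would need either to prove that for these parallelograms the cell centre is the extremal translate, or (better, and closer to how such results are actually proved) to show that under $a\geq1$ and (i) or (ii) every rotation of $P$ contains a spacefiller for $\mathbb{Z}^2$ --- for instance a parallelogram with two horizontal unit edges at heights $\pm\tfrac{1}{2}$, as in Theorem \ref{theo:cube} of this paper, or invoke F\'ary's criterion (Theorem \ref{theo:covering}). As written, the proposal establishes the correct threshold for one distinguished placement but does not prove either implication of the equivalence.
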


Let $K\in\Kn$. Denote by $Z(K)$ the lattice point covering radius of $K$, i.e., the smallest positive number $r$, such that $rK$ has the lattice point covering property.

We here concern the lattice point covering properties of regular polygons. Denote by
\begin{equation*}
H_n=\conv\left\{\left(\cos\left(\frac{2k\pi}{n}\right),\sin
\left(\frac{2k\pi}{n}\right)\right):k=0,1,\cdots,n-1\right\}
\end{equation*}
the regular $n$-gon. 

Our main result is:

\begin{theo}
	Let $t_i>0$, $i\in\Z_+$.
	
	(1) The following statements are equivalent:
	\begin{equation}
	\begin{split}
	i) &\, \text{$t_{4n}\cdot H_{4n}$ contains a lattice point of $\Z^2$ in any position,}\\
	ii) &\, \text{$t_{4n}\cdot H_{4n}$ contains a ball with radius $\frac{\sqrt{2}}{2}$,}\\
	iii) &\, t_{4n}\geq Z(H_{4n})=\frac{\frac{\sqrt{2}}{2}}{\cos\frac{\pi}{2n}}.
	\end{split}
	\end{equation}
	
	(2) The following statements are equivalent for $n=1,2$:
	\begin{equation}
	\begin{split}
	i) &\,\text{$t_{4n+2}\cdot H_{4n+2}$ contains a lattice point of $\Z^2$ in any position,}\\
	ii) &\, \text{$t_{4n+2}\cdot H_{4n+2}$ contains $[-\frac{1}{2},\frac{1}{2}]^2$,}\\
	iii) &\, t_6\geq Z(H_6)=\Solution\approx 0.788675\dotsc, \\
	&\, t_{10}\geq Z(H_{10})=\Solutiontwo\approx 0.734342\dotsc.
	\end{split}
	\end{equation}
\end{theo}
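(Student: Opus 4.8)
The plan is to use the standard reformulation that a convex body $K$ has the lattice point covering property if and only if, for every rotation $R_\theta$, the rotated body $R_\theta K$ covers the plane under $\Z^2$, i.e. $R_\theta K+\Z^2=\R^2$; equivalently, $K$ cannot be simultaneously rotated and translated so as to avoid every point of $\Z^2$. The single most useful fact is that the covering radius of $\Z^2$ is $\tfrac{\sqrt2}{2}$ (the deep holes of $\Z^2$ are the cell centres such as $(\tfrac12,\tfrac12)$, at distance $\tfrac{\sqrt2}{2}$ from the four nearest lattice points), so a Euclidean ball of radius $\tfrac{\sqrt2}{2}$ is rotation-invariant and covers $\Z^2$ in every position. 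I would use this to get one inclusion for free and reduce everything else to two tasks: an elementary inscription computation for (ii)$\Leftrightarrow$(iii), and the construction of an explicit lattice-avoiding placement for the failure direction.

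For part (1) I would first note that the in-ball of $t\cdot H_{4n}$ is centred at the origin with radius equal to $t$ times the apothem of $H_{4n}$, so ``$t\cdot H_{4n}$ contains a ball of radius $\tfrac{\sqrt2}{2}$'' is the explicit linear inequality in (iii); this settles (ii)$\Leftrightarrow$(iii). The implication (ii)$\Rightarrow$(i) is immediate from the fact recorded above, since a body containing a ball of radius $\tfrac{\sqrt2}{2}$ inherits its covering property in every orientation. For the remaining implication I argue by contrapositive: if $t<Z(H_{4n})$ then the apothem of $t\cdot H_{4n}$ is strictly less than $\tfrac{\sqrt2}{2}$, and I place $t\cdot H_{4n}$ centred at the deep hole $(\tfrac12,\tfrac12)$. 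Here the divisibility $4\mid 4n$ is essential: the set of edge-normal directions of $H_{4n}$ is invariant under the quarter-turn, so one may orient the polygon so that all four nearest lattice points lie in edge-midpoint (apothem) directions, hence strictly outside $t\cdot H_{4n}$; every other lattice point is farther than $\tfrac{\sqrt{10}}{2}$ from the centre, while the circumradius $t<1$, so those are outside as well. Thus $t\cdot H_{4n}$ avoids $\Z^2$ and (i) fails.

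For part (2) the equivalence (ii)$\Leftrightarrow$(iii) is again a direct computation: placing $H_{4n+2}$ in standard position and the square $[-\tfrac12,\tfrac12]^2$ axis-aligned, the corner $(\tfrac12,\tfrac12)$ is cut off by the edge whose outer normal is nearest to the $45^\circ$ direction, and solving that single linear constraint returns the stated values of $Z(H_6)$ and $Z(H_{10})$. The failure direction $\neg$(iii)$\Rightarrow\neg$(i) is clean and mirrors part (1): if $t\cdot H_{4n+2}$ does not contain the square then some corner lies outside, and since the four corners $(\pm\tfrac12,\pm\tfrac12)$ form one orbit under the symmetries $x\mapsto-x$, $y\mapsto-y$ of $H_{4n+2}$, all four corners lie outside at once; centring at $(\tfrac12,\tfrac12)$ then avoids the four nearest lattice points, and all others lie beyond the circumradius. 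Hence (i) fails.

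The hard direction, and the reason the statement is restricted to $n=1,2$, is (iii)$\Rightarrow$(i): I must show that containing the axis-aligned unit square forces covering in every orientation, i.e. that the standard orientation is genuinely the worst. Here the rotation-invariant ball trick is unavailable because the square is not round, so I would reduce the range of the rotation angle using the combined symmetries --- the dihedral symmetry of order $12$ of the hexagon (resp. order $20$ of the decagon) together with the order-$8$ symmetry of $\Z^2$ collapse $\theta$ to a short fundamental interval (of length $15^\circ$ in the hexagon case) --- and then, for each admissible $\theta$, determine the deepest hole of $R_\theta\Z^2$ relative to $t\cdot H_{4n+2}$ and check that it is covered at $t=Z(H_{4n+2})$. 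I expect the genuine obstacle to be this last optimization over $\theta$ and over the avoiding translate: one must rule out that some oblique orientation admits a larger lattice-avoiding copy than the standard one. This is a finite but delicate case analysis, tractable for the hexagon and decagon but without an evident closed form for general $n$, which is exactly why part (2) stops at $n=2$.
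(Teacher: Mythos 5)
Your treatment of part (1) is sound and essentially the paper's own argument: the ball of radius $\frac{\sqrt{2}}{2}$ gives (ii)$\Rightarrow$(i) for free, and the failure construction (centre at $(\frac12,\frac12)$, rotate so that the apothem directions point at the four nearest lattice points, which works precisely because the edge-normal set of $H_{4n}$ is invariant under the quarter-turn) is the same placement the paper uses. Likewise your $\neg$(ii)$\Rightarrow\neg$(i) step in part (2) is the content of the paper's Proposition \ref{prop:iff}: the four corners $(\pm\frac12,\pm\frac12)$ form a single orbit under the axis symmetries of $H_{4n+2}$, so missing one corner means missing all of $(\frac12,\frac12)+\Z^2$.

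The genuine gap is the direction you yourself flag as hard, (ii)$\Rightarrow$(i) for $H_6$ and $H_{10}$, and your proposal does not actually close it. Reducing $\theta$ to a fundamental interval of length $\frac{\pi}{12}$ (resp.\ $\frac{\pi}{20}$) is fine, but after that you say one should ``determine the deepest hole of $R_\theta\Z^2$ relative to $t\cdot H_{4n+2}$ and check that it is covered.'' That is not a finite case analysis as stated: for each of a continuum of angles $\theta$ you would need to compute the covering radius $c(o(tH_{4n+2},\theta),\Z^2)$, and there is no evident way to do this pointwise, let alone uniformly in $\theta$, without an additional structural idea. The paper supplies exactly that missing mechanism: by F\'ary's theorem covering is equivalent to containing an inscribed spacefiller, and Theorem \ref{theo:cube} converts this into the checkable condition that the Steiner symmetrization $\St_1(o(tH_{4n+2},\theta))$ contains $[-\frac12,\frac12]^2$ --- because then $o(tH_{4n+2},\theta)$ contains a sheared parallelogram $\conv\{(a,\frac12),(a-1,\frac12),(-a,-\frac12),(1-a,-\frac12)\}$ that tiles under $\Z^2$. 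The whole verification then collapses to showing that the point $(s(\theta),s(\theta))$ where the diagonal meets the boundary of the symmetrized polygon satisfies $s(\theta)\geq s(0)$ (a one-variable monotonicity statement, Propositions \ref{prop:A2}--\ref{prop:A4}), and the worst angle $\theta=0$ is exactly the axis-aligned square condition (ii). Without the Steiner-symmetrization reduction, or some equivalent device for producing an inscribed tiling parallelogram for every $\theta$, your outline of the hard direction remains a plan rather than a proof.
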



This paper is organized as follows: in the Section 2 we will introduce the covering radius and gauge function; then we will give a necessary and a sufficient condition of the lattice point covering property in the Section 3 and the Section 4; the content of the Section 5 will be the proof of the main theorem.

\section{covering radius}

The covering radius of $K\in\Knull$ with respect to $\Z^n$ is denoted by
\begin{equation*}
c(K)=c(K,\Z^n)=\min\{\lambda>0:\lambda K+\Z^n=\R^n\}.
\end{equation*}

The gauge function $\bnorm{\cdot}$ associated to a $K\in\Knull$ is the function 
\begin{equation*}
\bnorm{\cdot}_{K}:\R^n\rightarrow[0,\infty)
\end{equation*}
defined by
\begin{equation*}
\bnorm{\vv}_K=\min\{t>0:\vv\in tK\}.
\end{equation*}

\begin{theo}
	Let $K\in\Knull$. Then $K$ contains a lattice point of $\Z^n$ in any position if and only if, for any $o(K)$ rotation of $K$, $c(o(K))\leq 1$.
	\label{theo:rotation}
\end{theo}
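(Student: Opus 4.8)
The plan is to separate the two kinds of rigid motions. For a fixed rotation $o$, the body $o(K)$ again lies in $\Knull$, and the whole theorem reduces to a translation-only criterion: for a fixed $C\in\Knull$, \emph{every} translate $C+t$ contains a lattice point of $\Z^n$ if and only if $c(C)\le 1$. Granting this criterion, the theorem follows at once by quantifying over all rotations, since $K$ contains a lattice point in any position precisely when, for every rotation $o$, every translate of $o(K)$ contains a lattice point, which by the criterion (applied to $C=o(K)$) is exactly $c(o(K))\le 1$ for all $o$.

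First I would prove the translation criterion by unwinding the definitions. The translate $C+t$ meets $\Z^n$ exactly when there is $z\in\Z^n$ with $z-t\in C$, i.e. with $t\in z-C$; hence every translate of $C$ contains a lattice point if and only if $\R^n=\bigcup_{z\in\Z^n}(z-C)=\Z^n-C$, that is $-C+\Z^n=\R^n$. Applying the map $x\mapsto -x$ and using $\Z^n=-\Z^n$, this is equivalent to $C+\Z^n=\R^n$.

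Next I would connect $C+\Z^n=\R^n$ to the covering radius. The assignment $\lambda\mapsto\lambda C+\Z^n$ is monotone: for $0<\lambda_1\le\lambda_2$ one has $\lambda_1 C\subseteq\lambda_2 C$, because $C$ is convex with $\vnull\in\inte(C)$, so the segment from $\vnull$ to any $c\in C$ lies in $C$ and thus $\tfrac{\lambda_1}{\lambda_2}c\in C$; consequently $\lambda_1 C+\Z^n\subseteq\lambda_2 C+\Z^n$. Therefore $\{\lambda>0:\lambda C+\Z^n=\R^n\}$ is an upper ray whose minimum $c(C)$ is attained (by compactness of $C$), so $c(C)\,C+\Z^n=\R^n$. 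If $c(C)\le 1$, monotonicity gives $C+\Z^n\supseteq c(C)\,C+\Z^n=\R^n$; conversely $C+\Z^n=\R^n$ forces $c(C)\le 1$ directly. This establishes the translation criterion, and reassembling with the rotation quantifier finishes the proof.

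The set-theoretic unwinding and the monotonicity of $\lambda C+\Z^n$ are routine. The one point I would treat with care—and which I regard as the main (if modest) obstacle—is the attainment of the minimum in the definition of $c(C)$, i.e. that $\{\lambda:\lambda C+\Z^n=\R^n\}$ is closed from below; this rests on the compactness of $C$ together with the closedness of the covering condition, and it is exactly what guarantees that $c(C)\le 1$ (rather than a strict inequality) already suffices to cover.
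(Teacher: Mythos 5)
Your proposal is correct and follows essentially the same route as the paper: fix a rotation, reduce to the translation-only statement, and use the duality between ``every translate of $C$ meets $\Z^n$'' and ``$-C+\Z^n=\R^n$'' (equivalently $C+\Z^n=\R^n$), which is exactly the paper's gauge-function identity $\bnorm{\vx-\vu}_{o(K)}\leq 1\Leftrightarrow\bnorm{\vu-\vx}_{-o(K)}\leq 1$ in set-theoretic form. Your additional care about the monotonicity of $\lambda\mapsto\lambda C+\Z^n$ and the attainment of the minimum defining $c(C)$ is a welcome refinement of a point the paper leaves implicit, but it does not change the argument.
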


\begin{proof}
	If $c(o(K))>1$ for some rotation $o(K)$, then there exists a point $\vx\in\R^n$, such that for every $\vu\in\Z^2$,
	\begin{equation*}
	\bnorm{\vx-\vu}_{o(K)}>1\Longleftrightarrow \bnorm{\vu-\vx}_{-o(K)}>1.
	\end{equation*}
	Therefore, $-o(K)+\vx$ does not contain a lattice point of $\Z^2$.
	
	If $c(o(K))\leq 1$ for any rotation $o(K)$, then for any point $\vx\in\R^n$, since $o(K)+\Z^n=\R^n$, there exists a lattice point $\vu\in\Z^2$, such that
	\begin{equation*}
	\bnorm{\vx-\vu}_{o(K)}\leq 1\Longleftrightarrow \bnorm{\vu-\vx}_{-o(K)}\leq 1.
	\end{equation*}
	So, $-o(K)+\vx$ contain a lattice point $u$.
\end{proof}

Therefore, the lattice point covering property of a convex body depends on the covering radius of all rotations of this convex body.

\section{necessary condition}

According to the knowledge of the lattice covering for a centrally symmetric convex body, we have:

\begin{theo}[I.F\'ary,\cite{Fa50}]
	Let $K\in\Kstw$, such that $K+\Z^2$ is a lattice covering. Then $K$ contains a spacefiller $L$, i.e., a parallelogram or a centrally symmetric hexagon, such that $L+\Z^2$ is a lattice tiling.
	\label{theo:covering}
\end{theo}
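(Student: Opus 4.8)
The plan is to produce the spacefiller as the Dirichlet--Voronoi cell of $\Z^2$ measured in the gauge of $K$, and then to argue that in the plane such a cell is forced to be a parallelogram or a centrally symmetric hexagon.

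First I would set up two reductions. Since $K=-K$, the gauge $\bnorm{\cdot}_K$ is a norm, and $K+\Z^2=\R^2$ is equivalent to $c(K)\le 1$. Scaling $K$ to $c(K)\cdot K\subseteq K$ multiplies the covering radius by $1/c(K)$, so I may assume the covering is \emph{critical}, namely $c(K)=1$; in the present normalisation this says
\[
\max_{\vx\in\R^2}\ \min_{\vu\in\Z^2}\ \bnorm{\vx-\vu}_K=1 .
\]
Any spacefiller found in this smaller body still lies in the original $K$. I would also record that the whole assertion is covariant under invertible linear maps (they carry $\Z^2$ to another lattice and preserve the notions ``parallelogram'', ``centrally symmetric hexagon'' and ``$\Lambda$-tiling''); this lets me later place the lattice in a position reduced with respect to $\bnorm{\cdot}_K$.

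Next comes the candidate tile. Set
\[
V=\{\vx\in\R^2:\ \bnorm{\vx}_K\le\bnorm{\vx-\vu}_K\ \text{for all}\ \vu\in\Z^2\},
\]
the points at least as close, in the $K$-norm, to the origin as to any other lattice point. Two facts are immediate and give half of the theorem for free. The translates $V+\vu$ cover $\R^2$ (each point has a nearest site) with pairwise disjoint interiors, so $V+\Z^2$ is a tiling; and $V\subseteq K$, because $\vx\in V$ means the origin realises the minimum, whence $\bnorm{\vx}_K=\min_{\vu}\bnorm{\vx-\vu}_K\le c(K)=1$. Thus $V$ is already a $\Z^2$-tile inside $K$; only its shape is in question, and since $\bnorm{\cdot}_K$ and $\Z^2$ are centrally symmetric, so is $V$.

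The remaining and genuinely hard point is to show that $V$ is convex with at most six edges. The guiding mechanism is the classical bookkeeping of relevant vectors: a lattice vector can contribute a facet only through a nonzero class of $\Z^2/2\Z^2$, of which there are $2^2-1=3$, so one expects at most three opposite pairs of relevant vectors and hence at most six edges, a hexagon degenerating to a parallelogram when a pair is missing. In the Euclidean norm this count (Voronoi's theorem) together with the linearity of bisectors ends the proof at once. Here the obstacle is precisely that for a general norm the bisector $\{\vx:\bnorm{\vx}_K=\bnorm{\vx-\vu}_K\}$ is merely piecewise linear and the half-region $\{\vx:\bnorm{\vx}_K\le\bnorm{\vx-\vu}_K\}$ need not be convex, so neither the convexity nor the six-edge bound of $V$ is automatic; establishing them is the crux of the theorem. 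To overcome this I would exploit criticality together with the linear reduction: the vertices of $V$ are the Voronoi vertices incident to the origin's cell, each a local maximum of $\vx\mapsto\min_{\vu}\bnorm{\vx-\vu}_K$, and local maximality forces the nearest sites there to surround the vertex, hence to number at least three. Choosing a Gauss-reduced basis $\mathbf{b}_1,\mathbf{b}_2$ of $\Z^2$ for $\bnorm{\cdot}_K$, I expect to show that the only differences that can occur between nearest sites are the three short vectors $\mathbf{b}_1,\ \mathbf{b}_2,\ \mathbf{b}_1-\mathbf{b}_2$, so that the vertices around the origin come in exactly three opposite pairs and $V$ closes up into a centrally symmetric hexagon (or a parallelogram when one short vector is inactive). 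The containment $V\subseteq K$ being already secured by the covering-radius bound, the entire weight of the argument sits in this last combinatorial claim — that a critical planar lattice covering by a symmetric body has precisely these three pairs of contact directions — and that is the step I expect to be the main obstacle.
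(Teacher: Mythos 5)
The paper itself offers no proof of this statement -- it is imported verbatim from F\'ary's 1950 paper -- so your attempt can only be judged on its own terms, and on those terms it is not yet a proof. The outer skeleton is reasonable: normalising to $c(K)=1$, defining the gauge--Voronoi cell $V=\{\vx:\bnorm{\vx}_K\le\bnorm{\vx-\vu}_K\ \forall\vu\in\Z^2\}$, and observing $V\subseteq K$ via the covering radius are all correct. But two essential steps are missing. First, even the claim that $V+\Z^2$ is a \emph{tiling} is not automatic: for a gauge whose unit ball has segments in its boundary -- and every body to which this theorem is applied in the present paper is a polygon -- the bisector $\{\vx:\bnorm{\vx}_K=\bnorm{\vx-\vu}_K\}$ can contain a two--dimensional piece. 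Concretely, for $K=H_4$ the gauge is the $\ell^1$-norm, and the $\ell^1$-bisector of the origin and $(1,1)$ contains the entire quadrant $\{x_1\ge 1,\,x_2\le 0\}$, since there $|x_1|+|x_2|=x_1-x_2=|x_1-1|+|x_2-1|$. So ``pairwise disjoint interiors'' needs an argument (or a tie-breaking rule, after which $V$ is no longer obviously closed or symmetric), and cannot simply be asserted.

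Second, and more seriously, the entire content of the theorem -- that $V$ is convex with at most three pairs of parallel edges -- is exactly the step you defer and label ``the main obstacle.'' The $2^n-1$ count of relevant classes and the linearity of bisectors are Euclidean facts (Voronoi's theorem), whose proofs use the parallelogram law and strict convexity; neither transfers to a general gauge, where the cell genuinely need not be convex. Reducing F\'ary's theorem to the unproven claim that a critical symmetric planar covering has exactly three pairs of ``contact directions'' is a reduction to something at least as hard as the theorem itself. The arguments in the literature (F\'ary \cite{Fa50}; see also \cite{GrL87}) avoid metric Voronoi cells altogether: one fixes a suitable basis $\vu_1,\vu_2$ with $\vu_3=\vu_1+\vu_2$, produces points in the intersections of $K$ with its neighbouring translates $K+\vu_i$, and builds from them an inscribed centrally symmetric hexagon whose translates by $\Z^2$ tile; convexity of the tile is then built in by construction rather than extracted from a nearest-point partition. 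I would either carry out that construction or supply a complete proof of your combinatorial claim before regarding this as a proof.
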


Since the lattice point covering property depends on the lattice covering of all rotations, we have:

\begin{theo}
	Let $K\in\Kstw$. Then $K$ contains a lattice point of $\Z^2$ in any position, if and only if $o(K)$ contains a spacefiller, i.e., parallelogram or a centrally symmetric hexagon, with respect to the lattice $\Z^2$ for any rotation $o(K)$.
\end{theo}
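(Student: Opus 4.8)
The plan is to reduce the statement to a purely metric fact about lattice coverings and then invoke F\'ary's theorem (Theorem~\ref{theo:covering}) rotation by rotation. The starting point is Theorem~\ref{theo:rotation}, which tells us that $K$ contains a lattice point in any position precisely when $c(o(K))\le 1$ for every rotation $o(K)$. So the first step I would carry out is to translate the inequality $c(o(K))\le 1$ into the assertion that $o(K)$ is itself a lattice covering, i.e.\ $o(K)+\Z^2=\R^2$. This rests on the elementary monotonicity of dilations of a body in $\Knull$: since $\lambda K\subseteq\lambda' K$ whenever $0<\lambda\le\lambda'$, the set $\{\lambda>0:\lambda K+\Z^2=\R^2\}$ is an interval of the form $[c(K),\infty)$ (closed at the bottom because the defining minimum is attained), and hence $c(K)\le 1$ is equivalent to $K+\Z^2=\R^2$. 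Applying this to each $o(K)$, the characterization we must prove becomes: every rotation $o(K)$ is a lattice covering if and only if every rotation $o(K)$ contains a spacefiller $L$ with $L+\Z^2$ a lattice tiling.

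For the forward direction I would fix an arbitrary rotation $o(K)$ and note that, because rotations about the origin preserve central symmetry, $o(K)\in\Kstw$ as well. Since by hypothesis $o(K)+\Z^2=\R^2$, F\'ary's theorem (Theorem~\ref{theo:covering}) applies verbatim to $o(K)$ and yields a spacefiller $L\subseteq o(K)$ --- a parallelogram or centrally symmetric hexagon --- with $L+\Z^2$ a lattice tiling. As $o(K)$ was arbitrary, this is exactly the desired conclusion.

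The reverse direction is the easier half and amounts to the observation that a tiling is in particular a covering. Assuming each rotation $o(K)$ contains a spacefiller $L$ with $L+\Z^2$ a tiling, I would simply write $\R^2=L+\Z^2\subseteq o(K)+\Z^2\subseteq\R^2$, forcing $o(K)+\Z^2=\R^2$; combined with the first step this gives $c(o(K))\le 1$ for every rotation, and Theorem~\ref{theo:rotation} then returns the lattice point covering property.

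The only step that requires genuine care is the equivalence between $c(o(K))\le 1$ and $o(K)+\Z^2=\R^2$; once that monotonicity bookkeeping is in place, the theorem is essentially a rotation-indexed repackaging of F\'ary's result, and I do not expect any real obstacle. One should, however, make sure the class $\Kstw$ is preserved under rotation so that F\'ary's hypothesis is legitimately met for every $o(K)$, which is immediate but worth stating explicitly.
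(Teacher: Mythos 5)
Your proposal is correct and follows exactly the route the paper intends: the paper's proof is simply the citation ``Cf.\ Theorem~\ref{theo:rotation} and~\ref{theo:covering},'' and you have filled in precisely those details (the equivalence of $c(o(K))\le 1$ with $o(K)+\Z^2=\R^2$, the invariance of $\Kstw$ under rotation, F\'ary's theorem for the forward direction, and tiling-implies-covering for the converse). No gaps.
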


\begin{proof}
	Cf. Theorem \ref{theo:rotation} and \ref{theo:covering}.
\end{proof}

We here call it a necessary condition of the lattice point covering property, because this condition has to hold, but is uneasy to check.

\section{sufficient condition}

For a planar convex body, it is possible to check some inscribed parallelograms, i.e., by checking the Steiner symmetrization of the convex body. We will give a sufficient condition of the lattice point covering property in this way.


The Steiner symmetrization of $K\in\mathcal{K}^2$ with respect to $\{\vx\in\R^2:x_2=0\}$, denoted by $\St_1(K)$, is a convex body symmetric with respect to $\{\vx\in\R^2:x_2=0\}$, such that for each line $l$ vertical to $\{\vx\in\R^2:x_2=0\}$,
\begin{equation}
\vol_1(K\cap l)=\vol_1(\St_1(K)\cap l).
\end{equation}
For more information on the Steiner symmetrization, we refer to \cite[Section 9.1]{Gr07}. It is obvious that $\St_1(K)\subset\St_1(L)$ for two convex bodies $K\subset L$.

\begin{theo}
	Let $K\in\Kstw$. If for each rotation $o(K)$ of $K$, $\St_1(o(K))$ contains $[-\frac{1}{2},\frac{1}{2}]^2$, then $K$ contains a lattice point of $\Z^2$ in any position.
	\label{theo:cube}
\end{theo}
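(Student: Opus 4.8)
The plan is to invoke Theorem \ref{theo:rotation}, which reduces the lattice point covering property to the single estimate $c(o(K))\le 1$ for every rotation $o(K)$. So I fix an arbitrary rotation, write $M=o(K)$, and aim to show that the integer translates of $M$ cover the plane, i.e.\ $M+\Z^2=\R^2$. The whole argument rests on decoding the hypothesis $[-\tfrac12,\tfrac12]^2\subset\St_1(M)$ into information about the vertical chords of $M$ itself.

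First I would unwind the definition of the Steiner symmetrization. For $t\in\R$ let $\ell_t=\{\vx\in\R^2:x_1=t\}$ and set $L(t)=\vol_1(M\cap\ell_t)$. Because $M$ is convex, $M\cap\ell_t$ is a closed segment, and by definition $\St_1(M)\cap\ell_t$ is the segment of the same length $L(t)$ centred on $\{x_2=0\}$, namely $\{t\}\times[-L(t)/2,\,L(t)/2]$. Hence $[-\tfrac12,\tfrac12]^2\subset\St_1(M)$ holds if and only if, for every $t\in[-\tfrac12,\tfrac12]$, the centred segment $[-L(t)/2,L(t)/2]$ contains $[-\tfrac12,\tfrac12]$, which is equivalent to $L(t)\ge1$. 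So the hypothesis says exactly: every vertical chord of $M$ over the range $t\in[-\tfrac12,\tfrac12]$ has length at least $1$.

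Next I would exploit the elementary one-dimensional fact that a closed interval $I\subset\R$ of length at least $1$ satisfies $I+\Z=\R$. Applying this chord by chord: for each $t\in[-\tfrac12,\tfrac12]$ the segment $M\cap\ell_t$ has length $L(t)\ge1$, so its integer vertical translates cover all of $\ell_t$. Letting $t$ range over $[-\tfrac12,\tfrac12]$ shows that $\bigcup_{k\in\Z}(M+(0,k))$ covers the closed vertical strip $[-\tfrac12,\tfrac12]\times\R$. Finally, translating this strip horizontally by $(j,0)$, $j\in\Z$, tiles the plane, and $M+(j,k)$ covers the $j$-th translated strip; therefore $\bigcup_{(j,k)\in\Z^2}(M+(j,k))=\R^2$, i.e.\ $c(M)\le1$. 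Since $M=o(K)$ was an arbitrary rotation, Theorem \ref{theo:rotation} yields the lattice point covering property.

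The main obstacle is conceptual rather than computational: recognizing that the containment of the unit square in the Steiner symmetral is equivalent to a uniform lower bound on the chord lengths of $M$, and that this lower bound is precisely what makes the vertical integer translates tile each fibre. Two points require care. One is the boundary of the square: the containment must be read with closed sets, so I would check that the chords at the endpoints $t=\pm\tfrac12$ also have length $\ge1$ and that $M\cap\ell_t$ is genuinely closed (guaranteed by compactness of $M$), so that $(M\cap\ell_t)+\Z=\ell_t$ holds with equality. The other is that the argument never uses the central symmetry of $M$ — the chords $M\cap\ell_t$ need not be centred — so the hypothesis $K\in\Kstw$ serves only as the ambient setting, while the covering conclusion follows purely from the chord-length condition.
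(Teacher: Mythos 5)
Your proof is correct, and it reaches the conclusion by a slightly different route than the paper. The paper's proof is a two-line appeal to the spacefiller machinery of Section~3: from $[-\frac{1}{2},\frac{1}{2}]^2\subset\St_1(o(K))$ it extracts an inscribed parallelogram with two parallel unit edges on the lines $x=\pm\frac{1}{2}$ (written in a form that uses the central symmetry of $K$, so that the two edges are antipodal), notes that such a parallelogram is a spacefiller for $\Z^2$, and concludes $c(o(K))\le 1$ via Theorem~\ref{theo:rotation}. You instead unpack the same hypothesis into the chord-length statement $\vol_1\left(o(K)\cap\{x_1=t\}\right)\ge 1$ for all $t\in[-\frac{1}{2},\frac{1}{2}]$ and verify the covering $o(K)+\Z^2=\R^2$ directly, fibre by fibre over the strip $[-\frac{1}{2},\frac{1}{2}]\times\R$. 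The two arguments are really the same observation in different packaging --- your chords at $t=\pm\frac{1}{2}$ together with convexity are exactly what produce the paper's parallelogram --- but yours is more self-contained (it does not route through the notion of a spacefiller) and, as you correctly note, it never uses $K=-K$, so it proves the statement for arbitrary $K\in\mathcal{K}^2$; the paper's phrasing of the parallelogram leans on the symmetry only cosmetically. Your attention to the closed chords at the endpoints $t=\pm\frac{1}{2}$ is sound, though not strictly necessary since the interior chords already cover the open strip and a limit or a width-$1$ horizontal shift closes the gap. In short: correct, marginally more general, and consistent with the paper's reduction to Theorem~\ref{theo:rotation}.
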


\begin{proof}
	Notice that if $\St_1(o(K))$ contains $[-\frac{1}{2},\frac{1}{2}]^2$, then $o(K)$ contains a parallelogram in the form of $L=\conv\{(a,\frac{1}{2}),(a-1,\frac{1}{2}),(-a,-\frac{1}{2}),(1-a,-\frac{1}{2})\}$, which is a spacefiller with respect to $\Z^2$.
	Therefore $K$ has the lattice point covering property (cf. Theorem \ref{theo:rotation}).
\end{proof}
	
Somehow, we also have the following proposition of lattice covering.

\begin{prop}
	Let $K\in\Kstw$. If $K$ is symmetric with respect to $\{\vx\in\R^2:x_1=0\}$ and $\{\vx\in\R^2:x_2=0\}$, then $K+\Z^2=\R^2$ if and only if $K$ contains $[-\frac{1}{2},\frac{1}{2}]^2$.
	\label{prop:iff}
\end{prop}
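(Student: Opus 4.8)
The plan is to establish the two implications separately; the reverse direction is immediate, while the forward direction carries the content. For the easy direction, suppose $[-\frac{1}{2},\frac{1}{2}]^2\subseteq K$. Since the unit square tiles the plane under integer translations, $\R^2=[-\frac{1}{2},\frac{1}{2}]^2+\Z^2\subseteq K+\Z^2$, so $K+\Z^2=\R^2$. The work lies in the converse, which I would prove by contraposition: assuming $[-\frac{1}{2},\frac{1}{2}]^2\not\subseteq K$, I will exhibit a point of $\R^2$ left uncovered by $K+\Z^2$.

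First I would reduce the containment of the whole square to a single vertex. Because $K$ is convex and symmetric with respect to both coordinate axes, the four corners $(\pm\frac{1}{2},\pm\frac{1}{2})$ lie in $K$ all together or none at all, and $[-\frac{1}{2},\frac{1}{2}]^2=\conv\{(\pm\frac{1}{2},\pm\frac{1}{2})\}\subseteq K$ precisely when $(\frac{1}{2},\frac{1}{2})\in K$. Hence the hypothesis becomes $(\frac{1}{2},\frac{1}{2})\notin K$, i.e. $\bnorm{(\frac{1}{2},\frac{1}{2})}_K>1$. The key technical step is then a coordinatewise monotonicity of the gauge function: for a body symmetric about $\{x_1=0\}$, the value $\bnorm{(x_1,x_2)}_K$ is nondecreasing in $|x_1|$ for fixed $x_2$, and symmetrically in the other coordinate. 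To see this, set $t=\bnorm{(x_1',x_2)}_K$, so that $(x_1'/t,x_2/t)\in K$; by the axis symmetry $(-x_1'/t,x_2/t)\in K$ as well, and convexity places the entire horizontal segment joining them inside $K$. Thus any $(x_1,x_2)$ with $|x_1|\le|x_1'|$ satisfies $(x_1/t,x_2/t)\in K$, giving $\bnorm{(x_1,x_2)}_K\le t$.

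With this in hand I would show that $(\frac{1}{2},\frac{1}{2})$ is itself uncovered, i.e. it is the ``deepest hole'' of the coset $\Z^2+(\frac{1}{2},\frac{1}{2})$. The point $(\frac{1}{2},\frac{1}{2})$ lies in $K+\Z^2$ iff some translate $(\frac{1}{2}+m,\frac{1}{2}+n)$ with $m,n\in\Z$ lies in $K$. Every such translate satisfies $|\frac{1}{2}+m|\ge\frac{1}{2}$ and $|\frac{1}{2}+n|\ge\frac{1}{2}$, so combining the symmetry $\bnorm{(x_1,x_2)}_K=\bnorm{(|x_1|,|x_2|)}_K$ with the monotonicity applied once in each coordinate yields $\bnorm{(\frac{1}{2}+m,\frac{1}{2}+n)}_K\ge\bnorm{(\frac{1}{2},\frac{1}{2})}_K>1$. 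Therefore no translate belongs to $K$, whence $(\frac{1}{2},\frac{1}{2})\notin K+\Z^2$ and the covering fails, completing the contrapositive.

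The main obstacle I anticipate is precisely the coordinatewise monotonicity of the gauge function and the attendant verification that the corner $(\frac{1}{2},\frac{1}{2})$ is the nearest point of its coset in the gauge metric; this is the step that genuinely uses \emph{both} axis symmetries (one for monotonicity in each coordinate), and once the ``deepest hole at the box corner'' is pinned down the remainder is routine bookkeeping using compactness of $K$ to handle boundary cases.
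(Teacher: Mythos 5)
Your proposal is correct and follows essentially the same route as the paper: the forward direction via the unit-square tiling, and the converse by showing that if $(\frac{1}{2},\frac{1}{2})\notin K$ then the two axis symmetries force every point of $(\frac{1}{2},\frac{1}{2})+\Z^2$ to lie outside $K$, so that point is uncovered. The paper states this last step in one sentence; your coordinatewise gauge-monotonicity lemma is exactly the justification it leaves implicit.
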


\begin{proof}
	If $K$ contains $[-\frac{1}{2},\frac{1}{2}]^2$, then $K+\Z^2$ is a lattice covering. Otherwise, if $K$ does not contain $[-\frac{1}{2},\frac{1}{2}]^2$, i.e., $(\frac{1}{2},\frac{1}{2})\notin K$, then since $K$ is symmetric with respect to $\{\vx\in\R^2:x_1=0\}$ and $\{\vx\in\R^2:x_2=0\}$, $K$ does not contain any point of $(\frac{1}{2},\frac{1}{2})+\Z^2$, thus $K+\Z^2$ does not contain $(\frac{1}{2},\frac{1}{2})+\Z^2$.
\end{proof}

\section{proof of the main theorem}

In this section we discuss the lattice point covering property of some regular polygons. The proofs are based on Theorem \ref{theo:cube} and have the following steps:

1. Prove that the Steiner symmetrizations of all rotations of the convex body contain $[-\frac{1}{2},\frac{1}{2}]^2$.

2. Prove that a smaller copy of the convex body does not have the lattice point covering property.

We first look at the regular $4n$-gon.

\begin{theo}
	Let $t>0$. The following statements are equivalent:
	\begin{equation}
	\begin{split}
	i) & \text{ $t\cdot H_{4n}$ contains a lattice point of $\Z^2$ in any position,}\\
	ii) & \text{ $t\cdot H_{4n}$ contains a ball with radius $\frac{\sqrt{2}}{2}$,}\\
	iii) &\, t\geq Z(H_{4n})=\frac{\frac{\sqrt{2}}{2}}{\cos\frac{\pi}{2n}}.
	\end{split}
	\end{equation}
\end{theo}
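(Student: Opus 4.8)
The plan is to prove the three equivalences by a cycle of implications, relying on the fact (Theorem~\ref{theo:cube} and Theorem~\ref{theo:rotation}) that the lattice point covering property is controlled by Steiner symmetrizations of rotations, together with the rotational symmetry of the regular $4n$-gon. First I would establish the geometric quantity that governs everything: the inradius of $H_{4n}$. Since $H_{4n}$ has $4n$ vertices on the unit circle, its inradius (the apothem, i.e.\ the distance from the origin to an edge midpoint) is $\cos\frac{\pi}{4n}$, so the largest ball contained in $t\cdot H_{4n}$ has radius $t\cos\frac{\pi}{4n}$. Wait---the claimed value $\frac{\sqrt2/2}{\cos(\pi/2n)}$ suggests the governing angle is $\frac{\pi}{2n}$ rather than $\frac{\pi}{4n}$, so the relevant half-width is not the inradius but the minimal half-width achieved over rotations; I would compute, for the regular $4n$-gon, the smallest half-distance between a pair of parallel supporting lines as the polygon rotates, and show it equals $\cos\frac{\pi}{2n}$ times the circumradius $1$. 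This is the key elementary computation and I expect it to be the main obstacle, since one must verify that the worst rotation makes the inscribed square $[-\frac12,\frac12]^2$ hardest to cover.

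For the implication $ii)\Rightarrow i)$, I would argue that a ball $B$ of radius $\frac{\sqrt2}{2}$ contains the square $[-\frac12,\frac12]^2$ in any orientation (since the square's circumradius is $\frac{\sqrt2}{2}$). Hence if $t\cdot H_{4n}$ contains such a ball, then every rotation $o(t\cdot H_{4n})$ contains a copy of $B$ centered at the origin, so $\St_1(o(t\cdot H_{4n}))$ contains $\St_1(B)=B\supseteq[-\frac12,\frac12]^2$, and Theorem~\ref{theo:cube} gives the covering property. The passage $i)\Rightarrow iii)$ and $iii)\Rightarrow ii)$ is where the extremal rotation enters: for $iii)\Rightarrow ii)$ one checks directly that when $t=\frac{\sqrt2/2}{\cos(\pi/2n)}$ the inscribed ball of $t\cdot H_{4n}$ has radius exactly $t\cos\frac{\pi}{4n}$, and I must reconcile this with $\frac{\sqrt2}{2}$---so I would instead phrase $ii)$ in terms of the ball being inscribed and recompute the threshold, anticipating that the correct apothem-to-circumradius ratio for the worst orientation is $\cos\frac{\pi}{2n}$ after accounting for how the square sits relative to the $4n$-gon's $4$-fold symmetry.

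For the necessity direction $i)\Rightarrow iii)$ (equivalently, that a strictly smaller copy fails), I would follow Step~2 of the outlined strategy: exhibit an explicit rotation and translation of $t\cdot H_{4n}$ with $t<Z(H_{4n})$ that misses all of $\Z^2$. The natural candidate is the rotation aligning an edge of $H_{4n}$ with a coordinate direction so that the polygon's half-width in that direction is minimized; placing the center at $(\tfrac12,\tfrac12)$ and using the two-fold symmetry (Proposition~\ref{prop:iff} applies once we are in a doubly-symmetric position) shows that if $t\cdot H_{4n}$ does not contain $(\tfrac12,\tfrac12)$ then it misses the whole coset $(\tfrac12,\tfrac12)+\Z^2$. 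Thus the threshold is exactly the $t$ for which $(\tfrac12,\tfrac12)$ lands on the boundary in the extremal orientation, yielding $t=\frac{\sqrt2/2}{\cos(\pi/2n)}$. The remaining work is to confirm that no \emph{other} rotation imposes a stronger constraint, i.e.\ that the half-width computation genuinely attains its minimum at the claimed orientation---this verification, reducing a rotation-parameter optimization to the single worst angle via the dihedral symmetry of $H_{4n}$, is the crux and the hardest part of the argument.
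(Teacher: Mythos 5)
Your overall strategy coincides with the paper's: for ii)$\Rightarrow$i) the paper argues exactly as you do, namely that $B_2(\frac{\sqrt{2}}{2})$ contains $[-\frac{1}{2},\frac{1}{2}]^2$ and is rotation invariant, so every rotation of $t\cdot H_{4n}$ yields a lattice covering and Theorem \ref{theo:rotation} applies; for the converse the paper places a suitably rotated copy at $(\frac{1}{2},\frac{1}{2})$, which is your Proposition \ref{prop:iff} argument. Two structural remarks: the step you call the crux --- verifying that ``no other rotation imposes a stronger constraint'' --- is not actually needed, since sufficiency is handled uniformly by the inscribed ball and necessity requires only one bad position; and for that bad position the correct alignment is an edge normal pointing in the diagonal direction $\frac{\pi}{4}$ (towards the nearest points of $\Z^2$ as seen from $(\frac{1}{2},\frac{1}{2})$), not an edge aligned with a coordinate axis as you suggest.

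The genuine gap is the step you explicitly defer: reconciling ii) with iii). Your first computation is the right one --- the largest ball contained in $t\cdot H_{4n}$ has radius $t\cos\frac{\pi}{4n}$ --- and the alternative you propose, showing that the minimal half-width of $H_{4n}$ over all rotations equals $\cos\frac{\pi}{2n}$, cannot succeed: since $4n$ is even, opposite edges of $H_{4n}$ are parallel, so the minimal width is $2\cos\frac{\pi}{4n}$ and the minimal half-width again equals the inradius $\cos\frac{\pi}{4n}$. Consequently ii) is equivalent to $t\geq\frac{\sqrt{2}/2}{\cos\frac{\pi}{4n}}$, and the denominator in iii) should read $\cos\frac{\pi}{4n}$; note that the printed formula already fails at $n=1$, where it gives $\cos\frac{\pi}{2}=0$ while $Z(H_4)=1=\frac{\sqrt{2}/2}{\cos\frac{\pi}{4}}$ by the Niven--Zuckerman parallelogram criterion. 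The paper's own proof never derives the value in iii) at all (it only establishes i)$\Leftrightarrow$ii)), so you should not spend effort trying to make $\cos\frac{\pi}{2n}$ emerge from the geometry; prove instead that ii) holds if and only if $t\cos\frac{\pi}{4n}\geq\frac{\sqrt{2}}{2}$ and record the discrepancy with the stated formula.
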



\begin{proof}
	If $t\cdot H_{4n}$ contains $B_2(\frac{\sqrt{2}}{2})$, then each rotation $o(t\cdot H_{4n})$ also contains $B_2(\frac{\sqrt{2}}{2})$. Notice that $B_2(\frac{\sqrt{2}}{2})$ contains $[-\frac{1}{2},\frac{1}{2}]^2$, therefore $o(t\cdot H_{4n})+\Z^2$ is always a lattice covering, thus $t\cdot H_{4n}$ has the lattice point covering property (cf. Theorem \ref{theo:rotation}).
	
	If $t\cdot H_{4n}$ does not contain $B_2(\frac{\sqrt{2}}{2})$, then $o(t\cdot H'_{4n},\frac{\pi}{4})+(\frac{1}{2},\frac{1}{2})$, does not contain any lattice point of $\Z^2$, where $o(t\cdot H'_{4n},\frac{\pi}{4})$ is the rotation of $t\cdot H'_{4n}$ by angle $\frac{\pi}{4}$.
\end{proof}

Then we look at the regular hexagon.

\begin{theo}
	Let $t>0$. The following statements are equivalent:
	\begin{equation}
	\begin{split}
	i) &\,\text{$t\cdot H_{6}$ contains a lattice point of $\Z^2$ in any position,}\\
	ii) &\, \text{$t\cdot H_{6}$ contains $[-\frac{1}{2},\frac{1}{2}]^2$,}\\
	iii) &\, t\geq Z(H_6)=\Solution.
	\end{split}
	\end{equation}
	\label{theo:hexagon}
\end{theo}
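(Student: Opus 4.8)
The plan is to follow the two-step strategy announced at the start of Section 5, specializing it to the hexagon $H_6$ with the critical scaling $t=Z(H_6)=\frac{1}{3-\sqrt{3}}$. The logical structure I would aim for is the cycle $ii)\Rightarrow i)\Rightarrow iii)\Rightarrow ii)$, but since $ii)$ and $iii)$ are essentially a containment fact and its numerical encoding, the real content splits into a sufficiency direction (containing $[-\frac12,\frac12]^2$ after Steiner symmetrization forces the covering property) and a necessity direction (a strictly smaller copy fails).

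For the sufficiency direction $ii)\Rightarrow i)$, I would \emph{not} try to show that $t\cdot H_6$ itself contains the unit square in every rotation — it does not, since the hexagon is only $6$-fold symmetric and a rotated square can poke out. Instead I would invoke Theorem~\ref{theo:cube}: it suffices to prove that for \emph{every} rotation angle $\theta$, the Steiner symmetrization $\St_1\!\big(o(t\cdot H_6,\theta)\big)$ contains $[-\frac12,\frac12]^2$. The key reduction is that $\St_1$ preserves the width in the vertical direction along each vertical line, so containing the square reduces to checking that at each $x_1\in[-\frac12,\frac12]$ the vertical chord length of the rotated hexagon is at least $1$. By the $\pi/3$-rotational symmetry of $H_6$ it is enough to let $\theta$ range over an interval of length $\pi/6$ (combined with the reflection symmetry of the hexagon), and within that range I would identify the worst-case angle — the orientation minimizing the relevant chord lengths — and verify the inequality there. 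The value $\frac{1}{3-\sqrt{3}}$ should emerge precisely as the scaling at which the worst-case chord at $x_1=\pm\frac12$ equals $1$.

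For the necessity direction, establishing that $iii)$ fails when $t<Z(H_6)$, I would exhibit an explicit bad position. Paralleling the $4n$-gon proof, the natural candidate is to rotate $t\cdot H_6$ into the extremal orientation found above and translate its center to $(\frac12,\frac12)$ (or the appropriate half-integer point), then check that this placement misses all four surrounding lattice points $\{(0,0),(1,0),(0,1),(1,1)\}$ whenever $t$ is strictly below the threshold. Because of the symmetry it is enough to show the hexagon fails to reach the nearest such lattice point, which reduces again to the same chord/width computation, now read as a strict inequality.

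The main obstacle I anticipate is Step~1: proving the Steiner-symmetrized hexagon contains the square \emph{uniformly over all rotations}. The difficulty is that the extremal angle is not obvious a priori, and the vertical chord length of a rotated hexagon is a piecewise-smooth function of $\theta$ whose minimum over the fundamental domain must be located and then bounded. I would handle this by parametrizing the six edges of $o(t\cdot H_6,\theta)$, writing the chord length over $x_1\in[-\frac12,\frac12]$ explicitly as a function of $\theta$, and using the symmetry to cut the search down to a single monotone piece; the constant $3-\sqrt{3}$ in the denominator is the signature that the binding constraint comes from the corner $(\frac12,\frac12)$ at a specific tilt of the hexagon, so I would expect the calculation to reduce to solving one trigonometric equation. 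Once that extremal configuration is pinned down, both directions follow from the same inequality, read once as ``$\geq$'' for sufficiency and once as strict ``$<$'' for necessity.
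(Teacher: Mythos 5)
Your proposal follows essentially the same route as the paper: sufficiency via Theorem~\ref{theo:cube}, by showing that $\St_1\big(o(Z(H_6)\cdot H_6,\theta)\big)$ contains $[-\frac12,\frac12]^2$ for every rotation angle (the paper checks where the diagonal $x_2=x_1$ meets the boundary for $\theta\in[0,\frac{\pi}{12}]$ and finds the worst case at $\theta=0$, exactly the extremal configuration you predict), and necessity via the unrotated hexagon placed at $(\frac12,\frac12)$, which the paper packages through Proposition~\ref{prop:iff}. The only substantive work you defer --- locating the extremal angle and proving the monotonicity of the chord/intersection function --- is precisely what the paper carries out as the two-case analysis of $s(\theta)$ and $t(\theta)$ supported by Propositions~\ref{prop:A1}--\ref{prop:A3}.
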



\begin{proof}
	Denote by $o(K,\theta)$ the counterclockwise rotation of $K$ by angle $\theta$, i.e.,
	\begin{equation*}
	o(H_n,\theta)=\conv\{(\cos(\frac{2k\pi}{n}+\theta),\sin(\frac{2k\pi}{n}+\theta)):k=0,1,\cdots,n-1\}.
	\end{equation*}
	For the symmetric reason, the case $\frac{\pi}{12}\leq\theta\leq\frac{\pi}{6}$ is actually symmetric to the case $0\leq \theta\leq \frac{\pi}{12}$ with respect to the line $\{\vx\in\R^2:x_2=x_1\}$. 
	We are going to prove that $\St_1(o(\Solution H_{6},\theta))$ contains $[-\frac{1}{2},\frac{1}{2}]^2$ for $0\leq \theta\leq \frac{\pi}{12}$ (cf. Theorem \ref{theo:covering}). 
	
	By calculation,
	\begin{equation*}
	\St_1(o(H_6,\theta))=\{(\pm\cos\theta,0),(\pm\cos(\theta-\frac{\pi}{3}),\pm\frac{\sin\frac{\pi}{3}}{2\sin(\theta+\frac{\pi}{6})}),(\pm\cos(\theta+\frac{\pi}{3}),\pm\frac{\sqrt{3}}{2\cos\theta})\},
	\end{equation*}
	which by the way is also symmetric with respect to $\{\vx\in\R^2:x_1=0\}$. In order to check whether $\St_1(o(\Solution H_{6},\theta))$ contains $[-\frac{1}{2},\frac{1}{2}]^2$ for $0\leq \theta\leq \frac{\pi}{12}$, notice that when $0\leq\theta\leq\frac{\pi}{12}$, it holds 
	\begin{equation}
	\cos(\theta+\frac{\pi}{3})\leq\frac{\sqrt{3}}{2\cos\theta}
	\label{hexagon1}
	\end{equation}
	(cf. Proposition \ref{prop:A1}), therefore the line $\{\vx\in\R^2:x_2=x_1\}$ may intersects the boundary of $\St_1(o(H_6,\theta))$ with the edge
	\begin{equation*}
	\conv\{(\cos\theta,0),(\cos(\theta-\frac{\pi}{3}),\frac{\sin\frac{\pi}{3}}{2\sin(\theta+\frac{\pi}{6})})\}
	\end{equation*}
	or the edge 
	\begin{equation*}
	\conv\{(\cos(\theta-\frac{\pi}{3}),\frac{\sin\frac{\pi}{3}}{2\sin(\theta+\frac{\pi}{6})}),(\cos(\theta+\frac{\pi}{3}),\frac{\sqrt{3}}{2\cos\theta})\}.
	\end{equation*}
	
	Case 1: $\cos(\theta-\frac{\pi}{3})\leq\frac{\sin\frac{\pi}{3}}{2\sin(\theta+\frac{\pi}{6})}$, i.e., $0\leq\theta\leq\arcsin\left(\frac{\sqrt[4]{3}}{2}\right)-\frac{\pi}{6}$.
	
	In this case, the line $\{\vx\in\R^2:x_2=x_1\}$ intersects the edge
	\begin{equation*}
	\conv\{(\cos\theta,0),(\cos(\theta-\frac{\pi}{3}),\frac{\sin\frac{\pi}{3}}{2\sin(\theta+\frac{\pi}{6})})\}
	\end{equation*}
	with $(s(\theta),s(\theta))$, where
	\begin{equation*}
	\frac{s(\theta)}{s(\theta)-\cos\theta}=\frac{\frac{\sin\frac{\pi}{3}}{2\sin(\theta+\frac{\pi}{6})}}{\cos(\theta-\frac{\pi}{3})-\cos\theta},
	\end{equation*}
	thus
	\begin{equation}
	s(\theta)=\frac{\sin\frac{\pi}{3}\cos\theta}{\sin\frac{\pi}{3}-2\sin^2(\theta+\frac{\pi}{6})+2\cos\theta\sin(\theta+\frac{\pi}{6})}.
	\label{hexagon2}
	\end{equation}
	This function $s(\theta)$ is 
	increasing in $[0,\arcsin\left(\frac{\sqrt[4]{3}}{2}\right)-\frac{\pi}{6}]$ (cf. Proposition \ref{prop:A2}), therefore, 
	\begin{equation*}
	s(\theta)\geq s(0)=\frac{\sqrt{3}}{\sqrt{3}+1},
	\end{equation*}
	and
	\begin{equation*}
	\Solution s(\theta)\geq\frac{1}{2}.
	\end{equation*}
	So $\St_1(o(\Solution H_{6},\theta))$ always contains $[-\frac{1}{2},\frac{1}{2}]^2$ when $\theta\in[0,\arcsin\left(\frac{\sqrt[4]{3}}{2}\right)-\frac{\pi}{6}]$.
	
	Case 2: $\cos(\theta-\frac{\pi}{3})\geq\frac{\sin\frac{\pi}{3}}{2\sin(\theta+\frac{\pi}{6})}$, i.e., $\arcsin\left(\frac{\sqrt[4]{3}}{2}\right)-\frac{\pi}{6}\leq\theta\leq\frac{\pi}{12}$.
	
	In this case, the line $\{\vx\in\R^2:x_2=x_1\}$ intersects the edge
	\begin{equation*}
	\conv\{(\cos(\theta-\frac{\pi}{3}),\frac{\sin\frac{\pi}{3}}{2\sin(\theta+\frac{\pi}{6})}),(\cos(\theta+\frac{\pi}{3}),\frac{\sqrt{3}}{2\cos\theta})\}.
	\end{equation*}
	with $(t(\theta),t(\theta))$, where
	\begin{equation*}
	\frac{2t(\theta)-\frac{\sqrt{3}}{\cos\theta}}{t(\theta)-\cos\left(\theta+\frac{\pi}{3}\right)}=\frac{\frac{\sin\frac{\pi}{3}}{\sin\left(\theta+\frac{\pi}{6}\right)}-\frac{\sqrt{3}}{\cos\theta}}{\cos\left(\theta-\frac{\pi}{3}\right)-\cos\left(\theta+\frac{\pi}{3}\right)},
	\end{equation*}
	i.e.,
	\begin{equation}
	t(\theta)=\frac{2\sqrt{3}\sin\left(\theta+\frac{\pi}{6}\right)+\sqrt{3}\cos\left(\theta+\frac{\pi}{3}\right)}{4\cos\theta\sin\left(\theta+\frac{\pi}{6}\right)+\sqrt{3}}.
	\label{hexagon3}
	\end{equation}
	This function $t(\theta)$ is decreasing in $[\arcsin\left(\frac{\sqrt[4]{3}}{2}\right)-\frac{\pi}{6},\frac{\pi}{12}]$, and in fact decreasing in $[\arcsin\left(\frac{\sqrt[4]{3}}{2}\right)-\frac{\pi}{6},\frac{\pi}{6}]$ (cf. Proposition \ref{prop:A3}), therefore
	\begin{equation*}
	t(\theta)\geq t(\frac{\pi}{12})>t(\frac{\pi}{6})=\frac{\sqrt{3}}{\sqrt{3}+1},
	\end{equation*}
	and
	\begin{equation*}
	\Solution t(\theta)>\frac{1}{2}.
	\end{equation*}
	So $\St_1(o(\Solution H_{6},\theta))$ always contains $[-\frac{1}{2},\frac{1}{2}]^2$ when $\theta\in[\arcsin\left(\frac{\sqrt[4]{3}}{2}\right)-\frac{\pi}{6},\frac{\pi}{12}]$.
	
	To see that $\Solution$ is the minimum number, we refer to Proposition \ref{prop:iff}.
\end{proof}

Now we look at the regular $10$-gon.

\begin{theo}
	Let $t>0$. The following statements are equivalent:
	\begin{equation}
	\begin{split}
	i) &\,\text{$t_{10}\cdot H_{10}$ contains a lattice point of $\Z^2$ in any position,}\\
	ii) &\, \text{$t_{10}\cdot H_{10}$ contains $[-\frac{1}{2},\frac{1}{2}]^2$,}\\
	iii) &\, t_{10}\geq Z(H_{10})=\Solutiontwo.
	\end{split}
	\end{equation}
	\label{theo:tengon}
\end{theo}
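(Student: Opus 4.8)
The plan is to mirror the proof of Theorem~\ref{theo:hexagon}, establishing the three conditions as a cycle $i)\Rightarrow ii)\Rightarrow iii)\Rightarrow i)$ whose one substantial link is $iii)\Rightarrow i)$, obtained through Theorem~\ref{theo:cube}. First I would reduce the set of rotations to be tested. Writing $o(H_{10},\theta)$ for the counterclockwise rotation by $\theta$, the decagon is invariant under rotation by $\frac{\pi}{5}$, so only $\theta\in[0,\frac{\pi}{5}]$ matters; reflecting $\Z^2$ in the $x_1$-axis carries $o(H_{10},\theta)$ to $o(H_{10},-\theta)$ and so identifies $\theta$ with $\frac{\pi}{5}-\theta$, shrinking the range to $[0,\frac{\pi}{10}]$; reflecting $\Z^2$ in the diagonal $\{x_2=x_1\}$ identifies $\theta$ with $\frac{\pi}{10}-\theta$, shrinking it to $[0,\frac{\pi}{20}]$. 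Since the covering radius is preserved by these lattice reflections, it suffices to verify $\St_1(o(t\,H_{10},\theta))\supseteq[-\frac12,\frac12]^2$ for all $\theta\in[0,\frac{\pi}{20}]$, and by monotonicity under dilation only for the extremal value $t=Z(H_{10})$.

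Because $H_{10}$ is centrally symmetric, so is every $o(H_{10},\theta)$, whence $\St_1(o(H_{10},\theta))$ is symmetric in both $\{x_1=0\}$ and $\{x_2=0\}$. By Proposition~\ref{prop:iff} containing $[-\frac12,\frac12]^2$ is then equivalent to containing the corner $(\frac12,\frac12)$, i.e.\ to locating the point $(s(\theta),s(\theta))$ in which the diagonal $\{x_2=x_1\}$ meets the upper boundary of $\St_1(o(H_{10},\theta))$ and checking $Z(H_{10})\cdot s(\theta)\ge\frac12$. I would compute $\St_1(o(H_{10},\theta))$ explicitly as in the hexagon case: its profile breaks at the abscissae $\pm\cos\theta,\ \pm\cos(\frac{\pi}{5}\pm\theta),\ \pm\cos(\frac{2\pi}{5}\pm\theta)$ of the vertices of $o(H_{10},\theta)$, each lifted to half the length of the vertical chord there. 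At $\theta=0$ one has $\St_1(o(H_{10},0))=H_{10}$ and the diagonal meets the edge joining the vertices at angles $\frac{\pi}{5}$ and $\frac{2\pi}{5}$; intersecting the ray $\{x_2=x_1,\ x_1>0\}$ with that edge gives $(s(0),s(0))$ with $Z(H_{10})\cdot s(0)=\frac12$, and a short trigonometric identity turns the gauge $\bnorm{(\frac12,\frac12)}_{H_{10}}=1/(2s(0))$ into $\Solutiontwo$. This already yields $ii)\Leftrightarrow iii)$.

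The core of the argument is the inequality $s(\theta)\ge s(0)$ on $[0,\frac{\pi}{20}]$. As the upper boundary of $\St_1(o(H_{10},\theta))$ now consists of several edges, I would partition $[0,\frac{\pi}{20}]$ into the few subintervals on which the diagonal meets one fixed edge, on each of them write $s(\theta)$ in closed trigonometric form by intersecting $\{x_2=x_1\}$ with the relevant chord, and prove its monotonicity, exactly as the auxiliary estimates did for the hexagon. Checking continuity at the abscissa where the crossing jumps from one edge to the next and concatenating the monotone branches then gives $\min_{[0,\pi/20]}s(\theta)=s(0)$, hence $Z(H_{10})\cdot s(\theta)\ge\frac12$, and Theorem~\ref{theo:cube} delivers $iii)\Rightarrow i)$.

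The remaining link $i)\Rightarrow ii)$ I would argue by contraposition: $t\,H_{10}$ is itself symmetric in both axes, so if it fails to contain $[-\frac12,\frac12]^2$ then Proposition~\ref{prop:iff} shows the unrotated copy already fails to cover $\R^2$, i.e.\ $c(t\,H_{10})>1$, and Theorem~\ref{theo:rotation} denies the lattice point covering property. The step I expect to be hardest is the monotonicity of the third paragraph: relative to the hexagon the decagon supplies more vertices, hence more candidate edges and more subcases, and the crossing edge genuinely changes inside $[0,\frac{\pi}{20}]$, so the delicate points are to establish each piecewise monotonicity cleanly and to confirm that no junction between branches dips below $s(0)$.
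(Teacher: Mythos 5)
Your proposal is correct and follows essentially the same route as the paper: reduce by symmetry to $\theta\in[0,\frac{\pi}{20}]$, compute $\St_1(o(H_{10},\theta))$, locate the intersection of the diagonal with its boundary, prove monotonicity of that intersection point, and invoke Theorem \ref{theo:cube} and Proposition \ref{prop:iff}. The one place your expectation diverges from what actually happens is your anticipated hardest step: the paper's Propositions \ref{prop:A5}--\ref{prop:A8} show that the diagonal meets one and the same edge for all $\theta\in[0,\frac{\pi}{20}]$, so no partition into subintervals or matching at junctions is needed, and a single monotonicity computation (Proposition \ref{prop:A4}) closes the argument.
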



\begin{proof}
	Denote by $o(K,\theta)$ the counterclockwise rotation of $K$ by angle $\theta$, i.e.,
	\begin{equation*}
	o(H_n,\theta)=\conv\{(\cos(\frac{2k\pi}{n}+\theta),\sin(\frac{2k\pi}{n}+\theta)):k=0,1,\cdots,n-1\}.
	\end{equation*}
	For the symmetric reason, the case $\frac{\pi}{20}\leq\theta\leq\frac{\pi}{10}$ is actually symmetric to the case $0\leq \theta\leq \frac{\pi}{20}$ with respect to the line $\{\vx\in\R^2:x_2=x_1\}$. 
	We are going to prove that $\St_1(o(\Solutiontwo H_{10},\theta))$ contains $[-\frac{1}{2},\frac{1}{2}]^2$ for $0\leq \theta\leq \frac{\pi}{20}$ (cf. Theorem \ref{theo:covering}).
	
	By calculation,
	\begin{equation*}
	\begin{split}
	\St_1(o(H_{10},\theta))=\{&\pm(\cos\theta,0),\\
	&\left(\pm\cos(\theta-\frac{\pi}{5}),\pm\frac{\sin\frac{\pi}{10}\sin\frac{\pi}{5}}{\sin(\frac{\pi}{10}+\theta)}\right), \left(\pm\cos(\theta+\frac{\pi}{5}),\pm\frac{\sin\frac{\pi}{5}\sin\frac{3\pi}{10}}{\sin(\frac{3\pi}{10}-\theta)}\right),\\
	&\left(\pm\cos(\theta-\frac{2\pi}{5}),\pm\frac{\sin\frac{3\pi}{10}\sin\frac{2\pi}{5}}{\sin(\frac{3\pi}{10}+\theta)}\right), \left(\pm\cos(\theta+\frac{2\pi}{5}),\pm\frac{\sin\frac{2\pi}{5}}{\cos\theta}\right)\}.
	\end{split}
	\end{equation*}
	While $\theta\in[0,\frac{\pi}{20}]$, it holds 
	\begin{equation*}
	\cos(\theta-\frac{\pi}{5})>\frac{\sin\frac{\pi}{10}\sin\frac{\pi}{5}}{\sin(\frac{\pi}{10}+\theta)}\text{ (cf. \ref{prop:A5})},
	\end{equation*}
	\begin{equation*}
	\cos(\theta+\frac{\pi}{5})>\frac{\sin\frac{\pi}{5}\sin\frac{3\pi}{10}}{\sin(\frac{3\pi}{10}-\theta)}\text{ (cf. \ref{prop:A6})},
	\end{equation*}
	\begin{equation*}
	\cos(\theta-\frac{2\pi}{5})<\frac{\sin\frac{3\pi}{10}\sin\frac{2\pi}{5}}{\sin(\frac{3\pi}{10}+\theta)}\text{ (cf. \ref{prop:A7})},
	\end{equation*}
	\begin{equation*}
	\cos(\theta+\frac{2\pi}{5})<\frac{\sin\frac{2\pi}{5}}{\cos\theta}\text{ (cf. \ref{prop:A8})}.
	\end{equation*}
	Therefore, the line $\{\vx\in\R^2:x_2=x_1\}$ intersects $\St_1(o(H_{10},\theta))$ with the edge
	\begin{equation*}
	\conv\left\{\left(\cos(\theta+\frac{\pi}{5}),\frac{\sin\frac{\pi}{5}\sin\frac{3\pi}{10}}{\sin(\frac{3\pi}{10}-\theta)}\right),\left(\cos(\theta-\frac{2\pi}{5}),\frac{\sin\frac{3\pi}{10}\sin\frac{2\pi}{5}}{\sin(\frac{3\pi}{10}+\theta)}\right)\right\}
	\end{equation*}
	at the point $(t(\theta),t(\theta))$, where 
	\begin{equation}
	\frac{t(\theta)-\frac{\sin\frac{3\pi}{10}\sin\frac{2\pi}{5}}{\sin(\frac{3\pi}{10}+\theta)}}{t(\theta)-\cos(\theta-\frac{2\pi}{5})}=\frac{\frac{\sin\frac{\pi}{5}\sin\frac{3\pi}{10}}{\sin(\frac{3\pi}{10}-\theta)}-\frac{\sin\frac{3\pi}{10}\sin\frac{2\pi}{5}}{\sin(\frac{3\pi}{10}+\theta)}}{\cos(\theta+\frac{\pi}{5})-\cos(\theta-\frac{2\pi}{5})}.
	\label{octa}
	\end{equation}
	The function $t(\theta)$ is increasing in $[0,\frac{\pi}{20}]$ (cf. Proposition \ref{prop:A4}), therefore
	\begin{equation*}
	t(\theta)\geq t(0)=\frac{\sin\frac{\pi}{5}}{\cos\frac{\pi}{5}-\sin\frac{\pi}{5}+\sin\frac{2\pi}{5}-\cos\frac{2\pi}{5}},
	\end{equation*}
	and
	\begin{equation*}
	\Solutiontwo t(\theta)\geq\frac{1}{2}.
	\end{equation*}
	So $\St_1(o(\Solutiontwo H_{10},\theta))$ always contains $[-\frac{1}{2},\frac{1}{2}]^2$ when $\theta\in[0,\frac{\pi}{20}].$
	
	To see that $\Solutiontwo$ is the minimum number, we refer to Proposition \ref{prop:iff}.
\end{proof}

\begin{rem}
	$\St_1(o(H_{2n},\theta)),\,\theta\in[0,\frac{\pi}{4n}],$ has the vertices
	$$(\cos\theta,0),$$
	$$\left(\cos(\theta-\frac{2\pi}{n}),\frac{\sin\frac{\pi}{n}\sin\frac{2\pi}{n}}{\sin(\frac{\pi}{n}+\theta)}\right),\left(\cos(\theta+\frac{2\pi}{n}),\frac{\sin\frac{2\pi}{n}\sin\frac{3\pi}{n}}{\sin(\frac{3\pi}{n}-\theta)}\right),$$
	$$\left(\cos(\theta-\frac{4\pi}{n}),\frac{\sin\frac{3\pi}{n}\sin\frac{4\pi}{n}}{\sin(\frac{3\pi}{n}+\theta)}\right),\left(\cos(\theta+\frac{4\pi}{n}),\frac{\sin\frac{4\pi}{n}\sin\frac{5\pi}{n}}{\sin(\frac{5\pi}{n}-\theta)}\right)...$$
	
	We can deal with all regular $(4n+2)$-gons similarly (cf. Theorem \ref{theo:hexagon} and \ref{theo:tengon}).
\end{rem}

\appendix

\section{Some inequalities}

\begin{prop}
	\begin{equation*}
	\cos(\theta+\frac{\pi}{3})\leq\frac{\sqrt{3}}{2\cos\theta}
	\end{equation*}
	for $0\leq\theta\leq\frac{\pi}{12}$.
	\label{prop:A1}
\end{prop}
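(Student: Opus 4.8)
The plan is to clear the denominator and reduce the claim to an elementary bound on a single cosine via a product-to-sum identity. First I would observe that on the interval $0\le\theta\le\frac{\pi}{12}$ we have $\theta<\frac{\pi}{2}$, hence $\cos\theta>0$. Multiplying both sides of the asserted inequality by the positive quantity $2\cos\theta$ therefore preserves its direction, so the claim is equivalent to
\begin{equation*}
2\cos\theta\cos\!\left(\theta+\tfrac{\pi}{3}\right)\le\sqrt{3}.
\end{equation*}

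Next I would apply the product-to-sum formula $2\cos A\cos B=\cos(A-B)+\cos(A+B)$ with $A=\theta$ and $B=\theta+\frac{\pi}{3}$, which gives
\begin{equation*}
2\cos\theta\cos\!\left(\theta+\tfrac{\pi}{3}\right)=\cos\!\left(\tfrac{\pi}{3}\right)+\cos\!\left(2\theta+\tfrac{\pi}{3}\right)=\tfrac{1}{2}+\cos\!\left(2\theta+\tfrac{\pi}{3}\right).
\end{equation*}

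To finish, I would use the trivial bound $\cos(2\theta+\frac{\pi}{3})\le 1$, valid for every $\theta$, to obtain $\tfrac12+\cos(2\theta+\frac{\pi}{3})\le\tfrac{3}{2}$, and then note $\tfrac{3}{2}\le\sqrt{3}$ since $\tfrac{9}{4}\le 3$. Chaining these estimates yields the desired inequality. I do not expect any genuine obstacle here: the only point requiring care is verifying the sign of $\cos\theta$ before clearing the denominator, and the resulting bound is in fact quite slack (the inequality holds for all $\theta$ with $\cos\theta>0$, not merely on $[0,\frac{\pi}{12}]$), so the restriction on $\theta$ plays no essential role in this particular estimate.
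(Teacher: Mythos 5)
Your proposal is correct and follows essentially the same route as the paper: clear the positive denominator, apply the product-to-sum identity to reduce the claim to $\cos(2\theta+\frac{\pi}{3})\leq\sqrt{3}-\frac{1}{2}$, which holds trivially since $\sqrt{3}-\frac{1}{2}>1$. You merely make explicit the final numerical check ($\frac{3}{2}\leq\sqrt{3}$) that the paper leaves implicit.
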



\begin{proof}
It is equivalent to
\begin{equation*}
\begin{split}
& 2\cos\theta\cos(\theta+\frac{\pi}{3})\leq\sqrt{3}\\
\Longleftrightarrow & \cos\frac{\pi}{3}+\cos(2\theta+\frac{\pi}{3})\leq\sqrt{3}\\
\Longleftrightarrow & \cos(2\theta+\frac{\pi}{3})\leq\sqrt{3}-\frac{1}{2}.
\end{split}
\end{equation*}
\end{proof}

\begin{prop}
	\begin{equation*}
	s(\theta)=\frac{\sin\frac{\pi}{3}\cos\theta}{\sin\frac{\pi}{3}-2\sin^2(\theta+\frac{\pi}{6})+2\cos\theta\sin(\theta+\frac{\pi}{6})}\geq s(0)
	\end{equation*}
	for $0\leq\theta\leq\arcsin\left(\frac{\sqrt[4]{3}}{2}\right)-\frac{\pi}{6}$.
	\label{prop:A2}
\end{prop}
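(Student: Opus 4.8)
The plan is to reduce the unwieldy formula for $s(\theta)$ to a simple rational function of $\cos\theta$ by simplifying its denominator, after which the inequality $s(\theta)\geq s(0)$ becomes a transparent monotonicity statement.

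First I would rewrite the denominator $D(\theta)=\sin\frac{\pi}{3}-2\sin^2(\theta+\frac{\pi}{6})+2\cos\theta\sin(\theta+\frac{\pi}{6})$ using standard identities. The double-angle formula gives $2\sin^2(\theta+\frac{\pi}{6})=1-\cos(2\theta+\frac{\pi}{3})$, and the product-to-sum formula gives $2\cos\theta\sin(\theta+\frac{\pi}{6})=\sin(2\theta+\frac{\pi}{6})+\frac12$. Together with $\sin\frac{\pi}{3}=\frac{\sqrt3}{2}$ this collapses $D(\theta)$ to $\frac{\sqrt3-1}{2}+\cos(2\theta+\frac{\pi}{3})+\sin(2\theta+\frac{\pi}{6})$. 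Writing $\sin(2\theta+\frac{\pi}{6})=\cos(\frac{\pi}{3}-2\theta)$ and applying the sum-to-product formula with $\cos\frac{\pi}{3}=\frac12$ yields $\cos(2\theta+\frac{\pi}{3})+\cos(\frac{\pi}{3}-2\theta)=\cos 2\theta$, so $D(\theta)=\frac{\sqrt3-1}{2}+\cos 2\theta$. Substituting $\cos 2\theta=2\cos^2\theta-1$ then produces the clean form
\[
s(\theta)=\frac{\sqrt3\,\cos\theta}{4\cos^2\theta+\sqrt3-3}.
\]
A quick check at $\theta=0$ recovers $s(0)=\frac{\sqrt3}{\sqrt3+1}$, exactly the value used in the proof of Theorem~\ref{theo:hexagon}.

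With this in hand I would set $c=\cos\theta$ and study $f(c)=\frac{\sqrt3\,c}{4c^2+\sqrt3-3}$. On the interval in question the denominator is positive, since $\cos\theta\geq\cos\frac{\pi}{12}>\frac{\sqrt3}{2}$ forces $4\cos^2\theta>3>3-\sqrt3$. Differentiating gives
\[
f'(c)=\frac{\sqrt3\bigl(\sqrt3-3-4c^2\bigr)}{\bigl(4c^2+\sqrt3-3\bigr)^2},
\]
whose numerator is manifestly negative because $\sqrt3-3<0$ and $-4c^2\leq 0$; hence $f$ is strictly decreasing. Since $c=\cos\theta$ is itself decreasing on $[0,\arcsin(\frac{\sqrt[4]3}{2})-\frac{\pi}{6}]\subset[0,\frac{\pi}{12}]$, the composition $s(\theta)=f(\cos\theta)$ is increasing there, which gives $s(\theta)\geq s(0)$ and in fact the stronger monotonicity asserted in the main proof.

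The only genuine work is the trigonometric reduction of $D(\theta)$; once the denominator is recognized as $\frac{\sqrt3-1}{2}+\cos 2\theta$, the sign of $f'$ is immediate and no delicate estimate on the endpoint $\arcsin(\frac{\sqrt[4]3}{2})-\frac{\pi}{6}$ is ever needed. I expect the bookkeeping in the product-to-sum and sum-to-product steps to be the most error-prone part, but it is entirely routine.
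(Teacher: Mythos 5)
Your proposal is correct and follows essentially the same route as the paper: both reduce the denominator to $4\cos^2\theta-3+\sqrt{3}$, obtaining $s(\theta)=\frac{\sqrt{3}\cos\theta}{4\cos^2\theta+\sqrt{3}-3}$, and then conclude by monotonicity in $\cos\theta$. Your version is slightly more careful in that you verify explicitly (via the derivative in $c=\cos\theta$) that the rational function is decreasing where its denominator is positive, a step the paper only asserts.
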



\begin{proof}
Notice that
\begin{equation*}
s(\theta)=\frac{\sqrt{3}\cos\theta}{-3+\sqrt{3}+4\cos^2\theta}.
\end{equation*}
While $\theta\in[0,\arcsin\left(\frac{\sqrt[4]{3}}{2}\right)-\frac{\pi}{6}]$ is increasing, $\cos\theta>\sqrt{\frac{3-\sqrt{3}}{4}}$ is decreasing and thus $s(\theta)$ is increasing.
\end{proof}

\begin{prop}
	\begin{equation*}
	t(\theta)=\frac{2\sqrt{3}\sin\left(\theta+\frac{\pi}{6}\right)+\sqrt{3}\cos\left(\theta+\frac{\pi}{3}\right)}{4\cos\theta\sin\left(\theta+\frac{\pi}{6}\right)+\sqrt{3}}\geq t(\frac{\pi}{6})
	\end{equation*}
	for $\arcsin\left(\frac{\sqrt[4]{3}}{2}\right)-\frac{\pi}{6}\leq\theta\leq\frac{\pi}{6}$.
	\label{prop:A3}
\end{prop}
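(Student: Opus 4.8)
The plan is to prove the slightly stronger statement that $t$ is monotonically decreasing on the whole interval; since the claimed lower bound $t(\tfrac{\pi}{6})$ is exactly the value at the right endpoint, the inequality $t(\theta)\ge t(\tfrac{\pi}{6})$ follows at once. So the task reduces to showing $t'(\theta)\le 0$ throughout, and the left endpoint $\arcsin(\tfrac{\sqrt[4]{3}}{2})-\tfrac{\pi}{6}$ will play no essential role beyond ensuring $\theta>0$.

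First I would clean up the expression for $t$ using angle-addition formulas. Expanding the numerator collapses it to $3\sin(\theta+\tfrac{\pi}{3})$, and collecting the denominator (via $2\cos^2\theta=1+\cos 2\theta$ and $2\sqrt{3}\sin\theta\cos\theta=\sqrt{3}\sin 2\theta$) gives $2\sin(2\theta+\tfrac{\pi}{6})+1+\sqrt{3}$, so that
\begin{equation*}
t(\theta)=\frac{3\sin(\theta+\frac{\pi}{3})}{2\sin(2\theta+\frac{\pi}{6})+1+\sqrt{3}}.
\end{equation*}
Writing $N=3\sin(\theta+\tfrac{\pi}{3})$ and $D=2\sin(2\theta+\tfrac{\pi}{6})+1+\sqrt{3}$, one checks $D\ge 1+\sqrt{3}-2=\sqrt{3}-1>0$, so $t$ is smooth and the sign of $t'$ is the sign of $N'D-ND'$.

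Next I would compute $N'D-ND'$ with $N'=3\cos(\theta+\tfrac{\pi}{3})$, $D'=4\cos(2\theta+\tfrac{\pi}{6})$, and turn the products of sines and cosines into sums. This gives
\begin{equation*}
N'D-ND'=-3\cos 3\theta-9\sin\!\big(\tfrac{\pi}{6}-\theta\big)+3(1+\sqrt{3})\cos\!\big(\theta+\tfrac{\pi}{3}\big).
\end{equation*}
The decisive step is the identity $\sin(\tfrac{\pi}{6}-\theta)=\cos(\theta+\tfrac{\pi}{3})$, which merges the last two terms into a single multiple of $\cos(\theta+\tfrac{\pi}{3})$ and yields
\begin{equation*}
N'D-ND'=-3\Big[\cos 3\theta+(2-\sqrt{3})\cos\!\big(\theta+\tfrac{\pi}{3}\big)\Big].
\end{equation*}

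Finally, on the given interval one has $\theta\le\tfrac{\pi}{6}$, so both $3\theta$ and $\theta+\tfrac{\pi}{3}$ lie in $(0,\tfrac{\pi}{2}]$; hence $\cos 3\theta\ge 0$ and $\cos(\theta+\tfrac{\pi}{3})\ge 0$, and since $2-\sqrt{3}>0$ the bracket is nonnegative. Therefore $N'D-ND'\le 0$, i.e.\ $t'(\theta)\le 0$, so $t$ is decreasing, with equality in the derivative only at $\theta=\tfrac{\pi}{6}$ where both cosines vanish. Consequently $t(\theta)\ge t(\tfrac{\pi}{6})$ on the interval, as claimed. The only real obstacle is computational: carrying out the product-to-sum reduction correctly and, above all, recognizing the identity $\sin(\tfrac{\pi}{6}-\theta)=\cos(\theta+\tfrac{\pi}{3})$, which is precisely what renders the sign of the derivative transparent; once it is invoked, the positivity on $[\,\cdot\,,\tfrac{\pi}{6}]$ is immediate from $\theta\le\tfrac{\pi}{6}$.
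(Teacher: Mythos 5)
Your proof is correct: the collapse of the numerator to $3\sin(\theta+\tfrac{\pi}{3})$ and of the denominator to $2\sin(2\theta+\tfrac{\pi}{6})+1+\sqrt{3}$ checks out, the product-to-sum computation of $N'D-ND'$ is right, the identity $\sin(\tfrac{\pi}{6}-\theta)=\cos(\theta+\tfrac{\pi}{3})$ does reduce it to $-3\bigl[\cos 3\theta+(2-\sqrt{3})\cos(\theta+\tfrac{\pi}{3})\bigr]$, and on the stated interval (where $0<\theta\le\tfrac{\pi}{6}$) both cosines are nonnegative, so $t$ is decreasing and the bound follows. The paper proves the same monotone decrease by a different mechanism: it writes the denominator as $4\sin^2(\theta+\tfrac{\pi}{3})+\sqrt{3}-1$, which is equivalent to yours since $2\sin(2\theta+\tfrac{\pi}{6})=4\sin^2(\theta+\tfrac{\pi}{3})-2$, so that $t$ is the composition of $u\mapsto 3u/(4u^2+\sqrt{3}-1)=3/\bigl(4u+(\sqrt{3}-1)/u\bigr)$ with $u=\sin(\theta+\tfrac{\pi}{3})$; the outer function is decreasing once $u>\tfrac{1}{2}\sqrt{\sqrt{3}-1}$ (the paper's printed threshold $\tfrac{\sqrt{3}-1}{4}$ is a slip), and $u$ is increasing because $\theta+\tfrac{\pi}{3}\le\tfrac{\pi}{2}$. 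That substitution avoids the quotient rule entirely and makes the proof a two-line monotone-composition argument, at the cost of having to notice that numerator and denominator depend on $\theta$ only through $\sin(\theta+\tfrac{\pi}{3})$; your direct differentiation is longer but requires no such observation and makes the role of the constraint $\theta\le\tfrac{\pi}{6}$ completely transparent. In both arguments the specific left endpoint $\arcsin\bigl(\tfrac{\sqrt[4]{3}}{2}\bigr)-\tfrac{\pi}{6}$ is immaterial (decrease actually holds on all of $[0,\tfrac{\pi}{6}]$), as you correctly note.
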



\begin{proof}
Notice that
\begin{equation}
t(\theta)=\frac{3\sin(\theta+\frac{\pi}{3})}{4\sin^2(\theta+\frac{\pi}{3})+\sqrt{3}-1}.
\end{equation}
While $\theta\in[\arcsin\left(\frac{\sqrt[4]{3}}{2}\right)-\frac{\pi}{6},\frac{\pi}{6}]$ is increasing, $\sin(\theta+\frac{\pi}{3})>\frac{\sqrt{3}-1}{4}$ is increasing and thus $t(\theta)$ is decreasing.
\end{proof}

\begin{prop}
	\begin{equation*}
	t(\theta)\geq t(0)
	\end{equation*}
	for $0\leq \theta\leq \frac{\pi}{20}$ where
	\begin{equation*}
	\frac{t(\theta)-\frac{\sin\frac{3\pi}{10}\sin\frac{2\pi}{5}}{\sin(\frac{3\pi}{10}+\theta)}}{t(\theta)-\cos(\theta-\frac{2\pi}{5})}=\frac{\frac{\sin\frac{\pi}{5}\sin\frac{3\pi}{10}}{\sin(\frac{3\pi}{10}-\theta)}-\frac{\sin\frac{3\pi}{10}\sin\frac{2\pi}{5}}{\sin(\frac{3\pi}{10}+\theta)}}{\cos(\theta+\frac{\pi}{5})-\cos(\theta-\frac{2\pi}{5})}.
	\end{equation*}
	\label{prop:A4}
\end{prop}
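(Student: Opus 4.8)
The plan is to first remove the implicit character of the definition. The displayed relation is \emph{linear} in $t(\theta)$, so cross-multiplying and solving gives a closed form. Writing the two endpoints of the relevant edge as $(D,C)$ and $(B,A)$, where $A=\frac{\sin\frac{3\pi}{10}\sin\frac{2\pi}{5}}{\sin(\frac{3\pi}{10}+\theta)}$, $C=\frac{\sin\frac{\pi}{5}\sin\frac{3\pi}{10}}{\sin(\frac{3\pi}{10}-\theta)}$, $B=\cos(\theta-\frac{2\pi}{5})$ and $D=\cos(\theta+\frac{\pi}{5})$, clearing the fraction in the definition yields
\[
t(\theta)=\frac{AD-BC}{A+D-B-C},
\]
exactly the analogue of the explicit expressions used for the hexagon in \eqref{hexagon2} and \eqref{hexagon3}.

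Next I would simplify this ratio into a manageable trigonometric form, as was done in Propositions \ref{prop:A2} and \ref{prop:A3}. Multiplying numerator and denominator by $\sin(\frac{3\pi}{10}+\theta)\sin(\frac{3\pi}{10}-\theta)$ clears the denominators of $A$ and $C$, and a common factor $\sin\frac{3\pi}{10}$ drops out of both. Applying product-to-sum and sum-to-product identities (all angles here are integer multiples of $\frac{\pi}{10}$, so the combinations collapse cleanly) reduces the numerator to $\frac14\sin\frac{2\pi}{5}+\sin\frac{3\pi}{10}\cos\frac{\pi}{10}\sin(\frac{\pi}{10}-2\theta)$, i.e. a constant plus a single sinusoid in $2\theta$; the denominator becomes an explicit combination of harmonics of $\theta$. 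One checks that at $\theta=0$ this reproduces the value $t(0)=\frac{\sin\frac{\pi}{5}}{\cos\frac{\pi}{5}-\sin\frac{\pi}{5}+\sin\frac{2\pi}{5}-\cos\frac{2\pi}{5}}$ quoted in Theorem \ref{theo:tengon}.

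With $t(\theta)=N(\theta)/M(\theta)$ in hand (where $M(\theta)>0$ on the interval, as one verifies at the endpoints), the claim $t(\theta)\geq t(0)$ reduces to showing $t$ is increasing, i.e. to the sign of the numerator $N'M-NM'$ of $t'(\theta)$. This is a trigonometric polynomial in $\theta$, and the goal is to show it is nonnegative on $[0,\frac{\pi}{20}]$. The narrowness of the interval helps: $\theta$ enters only through low harmonics, with $2\theta\in[0,\frac{\pi}{10}]$ and $3\theta\in[0,\frac{3\pi}{20}]$, so the relevant sines and cosines are all monotone and of controlled sign there, and $N'M-NM'$ can be bounded below by a few elementary estimates. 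Where feasible I would prefer the cleaner device of Propositions \ref{prop:A2} and \ref{prop:A3}, namely rewriting $t$ as a rational function of a single monotone quantity (such as $\cos\theta$ or $\cos2\theta$) and composing monotonicities.

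The main obstacle is precisely this last step. Unlike the hexagon, where the implicit intersection collapsed to the transparent one-variable rational functions in \eqref{hexagon2} and \eqref{hexagon3}, the $10$-gon denominator genuinely carries a $3\theta$ harmonic, so $t$ does not reduce to a single-variable rational function of $\cos\theta$ in an equally clean way. Establishing the nonnegativity of $N'M-NM'$ on $[0,\frac{\pi}{20}]$ -- equivalently, the positivity of a trigonometric polynomial on a short interval -- is therefore the delicate part of the argument; everything preceding it is routine bookkeeping with standard identities. Once monotonicity is secured, $t(\theta)\geq t(0)$ follows immediately, which is the assertion of the Proposition.
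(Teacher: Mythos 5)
Your setup is correct---solving the linear relation gives $t(\theta)=\frac{AD-BC}{A+D-B-C}$, and your simplified numerator $\frac14\sin\frac{2\pi}{5}+\sin\frac{3\pi}{10}\cos\frac{\pi}{10}\sin(\frac{\pi}{10}-2\theta)$ agrees with what one gets after clearing the factors $\sin(\frac{3\pi}{10}\pm\theta)$---but the argument stops exactly where the content of the proposition begins: the monotonicity of $t$ on $[0,\frac{\pi}{20}]$ is never established. You describe the sign analysis of $N'M-NM'$ as ``the delicate part'' and leave it undone, so as written the proof is incomplete.

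Moreover, the obstacle you flag is illusory, and the paper's proof goes precisely through the route you dismiss as unavailable. After clearing denominators, the denominator $A+D-B-C$ becomes
\begin{equation*}
\sin\tfrac{3\pi}{10}\,\sin\bigl(\tfrac{\pi}{10}-\theta\bigr)\,\bigl(2\cos^2\theta+\sin\tfrac{3\pi}{5}-\cos\tfrac{3\pi}{5}-1\bigr),
\end{equation*}
while your numerator factors: since $2\cos\theta\sin(\frac{\pi}{10}-\theta)=\sin\frac{\pi}{10}+\sin(\frac{\pi}{10}-2\theta)$ and $\sin\frac{3\pi}{10}\cos\frac{\pi}{10}\sin\frac{\pi}{10}=\frac14\sin\frac{2\pi}{5}$, it equals $\sin\frac{3\pi}{10}\cdot 2\sin\frac{3\pi}{10}\cos\frac{\pi}{10}\cos\theta\,\sin(\frac{\pi}{10}-\theta)$. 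The common factor $\sin(\frac{\pi}{10}-\theta)$ cancels (this is the missing idea), leaving
\begin{equation*}
t(\theta)=\frac{2\sin\frac{3\pi}{10}\cos\frac{\pi}{10}\cos\theta}{2\cos^2\theta+\sin\frac{3\pi}{5}-\cos\frac{3\pi}{5}-1},
\end{equation*}
a one-variable rational function $u\mapsto au/(2u^2+c)$ of $u=\cos\theta$ with $a>0$ and $c=\sin\frac{3\pi}{5}-\cos\frac{3\pi}{5}-1\approx 0.26>0$. For $u\geq\cos\frac{\pi}{20}$ one has $2u^2>c$, so this function is decreasing in $u$; composing with the decreasing map $\theta\mapsto\cos\theta$ shows $t$ is increasing, exactly as in Propositions \ref{prop:A2} and \ref{prop:A3}, and $t(\theta)\geq t(0)$ follows. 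Without this cancellation (or a completed lower bound for $N'M-NM'$), your proposal does not prove the proposition.
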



\begin{proof}
Notice that
\begin{equation}
t(\theta)=\frac{2\sin\frac{3\pi}{10}\cos\frac{\pi}{10}\cos\theta}{2\cos^2\theta+\sin\frac{3\pi}{5}-\cos\frac{3\pi}{5}-1}.
\end{equation}
While $\theta\in[0,\frac{\pi}{20}]$, $\cos\theta>\sqrt{\frac{1-\sin\frac{3\pi}{5}+\cos\frac{3\pi}{5}}{2}}$ is decreasing and thus $t(\theta)$ is increasing.
\end{proof}


\begin{prop}
	\begin{equation}
	\cos(\theta-\frac{\pi}{5})>\frac{\sin\frac{\pi}{10}\sin\frac{\pi}{5}}{\sin(\frac{\pi}{10}+\theta)}
	\end{equation}
	for $0\leq\theta\leq\frac{\pi}{20}$.
	\label{prop:A5}
\end{prop}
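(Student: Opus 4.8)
The plan is to clear the (positive) denominator and reduce the claim to a single elementary monotonicity argument, in the same spirit as Propositions~\ref{prop:A1}--\ref{prop:A4}. On the interval $0\leq\theta\leq\frac{\pi}{20}$ the angle $\frac{\pi}{10}+\theta$ lies in $[\frac{\pi}{10},\frac{3\pi}{20}]\subset(0,\frac{\pi}{2})$, so $\sin(\frac{\pi}{10}+\theta)>0$ and the asserted inequality is equivalent to
\begin{equation*}
\cos\left(\theta-\frac{\pi}{5}\right)\sin\left(\theta+\frac{\pi}{10}\right)>\sin\frac{\pi}{10}\sin\frac{\pi}{5}.
\end{equation*}
Applying the product-to-sum identities to each side ($\cos A\sin B=\frac12[\sin(A+B)-\sin(A-B)]$ on the left, $\sin A\sin B=\frac12[\cos(A-B)-\cos(A+B)]$ on the right) turns this into
\begin{equation*}
\sin\left(2\theta-\frac{\pi}{10}\right)+\sin\frac{3\pi}{10}>\cos\frac{\pi}{10}-\cos\frac{3\pi}{10}.
\end{equation*}

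Next I would exploit that the only $\theta$-dependence sits in the term $\sin(2\theta-\frac{\pi}{10})$. As $\theta$ runs over $[0,\frac{\pi}{20}]$ its argument increases through $[-\frac{\pi}{10},0]$, an interval on which the sine is increasing; hence the left-hand side is an increasing function of $\theta$ and attains its minimum at $\theta=0$. It therefore suffices to verify the inequality at the single endpoint $\theta=0$.

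Finally, at $\theta=0$ the undivided inequality collapses to $\sin\frac{\pi}{10}\cos\frac{\pi}{5}>\sin\frac{\pi}{10}\sin\frac{\pi}{5}$, that is $\cos\frac{\pi}{5}>\sin\frac{\pi}{5}$, which holds because $\frac{\pi}{5}<\frac{\pi}{4}$. I do not expect a genuine obstacle here: once the denominator is cleared, the statement is a bounded, monotone trigonometric inequality, and the only care required is to record that $\sin(\frac{\pi}{10}+\theta)>0$ (so that clearing it preserves the direction of the inequality) and that the argument $2\theta-\frac{\pi}{10}$ stays on the increasing branch of the sine throughout the range.
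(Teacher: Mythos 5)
Your proposal is correct and follows essentially the same route as the paper: clear the positive denominator, apply product-to-sum to reduce the $\theta$-dependence to the single increasing term $\sin(2\theta-\frac{\pi}{10})$, and check the endpoint $\theta=0$. The only (harmless) cosmetic differences are that you also expand the constant right-hand side as $\cos\frac{\pi}{10}-\cos\frac{3\pi}{10}$ and that your endpoint verification reduces cleanly to $\cos\frac{\pi}{5}>\sin\frac{\pi}{5}$, which is slightly tidier than the paper's direct numerical assertion.
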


\begin{proof}
It is equivalent to
\begin{equation}
\begin{split}
& \sin(\frac{\pi}{10}+\theta)\cos(\theta-\frac{\pi}{5})>\sin\frac{\pi}{10}\sin\frac{\pi}{5}\\
\Leftrightarrow & \sin(2\theta-\frac{\pi}{10})+\sin\frac{3\pi}{10}>2\sin\frac{\pi}{10}\sin\frac{\pi}{5}\\
\Leftrightarrow & \sin(2\theta-\frac{\pi}{10})>2\sin\frac{\pi}{10}\sin\frac{\pi}{5}-\sin\frac{3\pi}{10}.
\end{split}
\end{equation}
Since $\sin(2\theta-\frac{\pi}{10})$ is increasing for $0\leq\theta\leq\frac{\pi}{20}$, and
\begin{equation}
\sin\frac{3\pi}{10}-\sin\frac{\pi}{10}-2\sin\frac{\pi}{10}\sin\frac{\pi}{5}>0,
\end{equation}
the inequality holds for $0\leq\theta\leq\frac{\pi}{20}$.
\end{proof}

\begin{prop}
	\begin{equation}
	\cos(\theta+\frac{\pi}{5})>\frac{\sin\frac{\pi}{5}\sin\frac{3\pi}{10}}{\sin(\frac{3\pi}{10}-\theta)}
	\end{equation}
	for $0\leq\theta\leq\frac{\pi}{20}$.
	\label{prop:A6}
\end{prop}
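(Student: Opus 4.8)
The plan is to mirror the proof of Proposition \ref{prop:A5}: clear the denominator, collapse both sides with product-to-sum identities, and reduce to an obviously true inequality. First I would note that for $0\leq\theta\leq\frac{\pi}{20}$ the argument $\frac{3\pi}{10}-\theta$ lies in $[\frac{\pi}{4},\frac{3\pi}{10}]$, so $\sin(\frac{3\pi}{10}-\theta)>0$ and multiplying through is legitimate. The claim is therefore equivalent to
\begin{equation*}
\sin\left(\tfrac{3\pi}{10}-\theta\right)\cos\left(\theta+\tfrac{\pi}{5}\right)>\sin\tfrac{\pi}{5}\sin\tfrac{3\pi}{10}.
\end{equation*}

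The key observation is that the two angles on the left sum to $\left(\frac{3\pi}{10}-\theta\right)+\left(\theta+\frac{\pi}{5}\right)=\frac{\pi}{2}$, which makes the product-to-sum identity collapse cleanly. Indeed,
\begin{equation*}
\sin\left(\tfrac{3\pi}{10}-\theta\right)\cos\left(\theta+\tfrac{\pi}{5}\right)=\tfrac{1}{2}\left[\sin\tfrac{\pi}{2}+\sin\left(\tfrac{\pi}{10}-2\theta\right)\right]=\tfrac{1}{2}\left[1+\sin\left(\tfrac{\pi}{10}-2\theta\right)\right],
\end{equation*}
while on the right the two angles differ by $\frac{\pi}{10}$ and sum to $\frac{\pi}{2}$, so $\sin\frac{\pi}{5}\sin\frac{3\pi}{10}=\frac{1}{2}\left[\cos\frac{\pi}{10}-\cos\frac{\pi}{2}\right]=\frac{1}{2}\cos\frac{\pi}{10}$. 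Substituting both, the inequality becomes $1+\sin\left(\frac{\pi}{10}-2\theta\right)>\cos\frac{\pi}{10}$, that is,
\begin{equation*}
\sin\left(\tfrac{\pi}{10}-2\theta\right)>\cos\tfrac{\pi}{10}-1.
\end{equation*}

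Finally I would close it by a sign argument rather than any monotonicity estimate: the right-hand side $\cos\frac{\pi}{10}-1$ is strictly negative, whereas for $0\leq\theta\leq\frac{\pi}{20}$ the argument $\frac{\pi}{10}-2\theta$ ranges over $[0,\frac{\pi}{10}]$, where sine is nonnegative, so the left-hand side is $\geq 0$. Hence the inequality holds throughout the interval. I do not anticipate a genuine obstacle here; the only thing requiring care is the angle bookkeeping in the two product-to-sum steps, in particular confirming that the summed angle is exactly $\frac{\pi}{2}$ so that the constant term is precisely $1$ (respectively $0$), since this is what turns the estimate into a trivial comparison with a negative quantity rather than a delicate one.
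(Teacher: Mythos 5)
Your proposal is correct and follows essentially the same route as the paper: clear the denominator, apply the product-to-sum identity (using that the two angles sum to $\frac{\pi}{2}$), and reduce to $\sin\left(\frac{\pi}{10}-2\theta\right)>2\sin\frac{\pi}{5}\sin\frac{3\pi}{10}-1=\cos\frac{\pi}{10}-1$, which holds since the left side is nonnegative on $[0,\frac{\pi}{20}]$ and the right side is negative. The only cosmetic difference is that you also collapse the right-hand side to $\frac{1}{2}\cos\frac{\pi}{10}$ and argue by sign directly, while the paper invokes monotonicity of $\sin\left(\frac{\pi}{10}-2\theta\right)$; these are equivalent.
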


\begin{proof}
It is equivalent to
\begin{equation}
\begin{split}
& \sin(\frac{3\pi}{10}-\theta)\cos(\theta+\frac{\pi}{5})>\sin\frac{\pi}{5}\sin\frac{3\pi}{10}\\
\Leftrightarrow & \sin\frac{\pi}{2}+\sin(\frac{\pi}{10}-2\theta)>2\sin\frac{\pi}{5}\sin\frac{3\pi}{10}\\
\Leftrightarrow & \sin(\frac{\pi}{10}-2\theta)>2\sin\frac{\pi}{5}\sin\frac{3\pi}{10}-\sin\frac{\pi}{2}.
\end{split}
\end{equation}
Since $\sin(\frac{\pi}{10}-2\theta)$ is decreasing for $0\leq\theta\leq\frac{\pi}{20}$, and
\begin{equation}
0>2\sin\frac{\pi}{5}\sin\frac{3\pi}{10}-\sin\frac{\pi}{2},
\end{equation}
the inequality holds for $0\leq\theta\leq\frac{\pi}{20}$.
\end{proof}

\begin{prop}
	\begin{equation}
	\cos(\theta-\frac{2\pi}{5})<\frac{\sin\frac{3\pi}{10}\sin\frac{2\pi}{5}}{\sin(\frac{3\pi}{10}+\theta)}
	\end{equation}
	for $0\leq\theta\leq\frac{\pi}{20}$.
	\label{prop:A7}
\end{prop}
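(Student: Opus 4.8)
The plan is to follow exactly the template already used for Propositions \ref{prop:A5} and \ref{prop:A6}: clear the (positive) denominator, convert every product of trigonometric functions into a sum via the product-to-sum identities, and thereby reduce the claim to the statement that a single sine of a linear function of $\theta$ lies below a fixed constant.

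First I would observe that on $[0,\frac{\pi}{20}]$ the argument $\frac{3\pi}{10}+\theta$ lies in $[\frac{6\pi}{20},\frac{7\pi}{20}]\subset(0,\frac{\pi}{2})$, so $\sin(\frac{3\pi}{10}+\theta)>0$ and both sides may be multiplied by it without reversing the inequality. The claim becomes
\begin{equation*}
\sin\Bigl(\tfrac{3\pi}{10}+\theta\Bigr)\cos\Bigl(\theta-\tfrac{2\pi}{5}\Bigr)<\sin\tfrac{3\pi}{10}\sin\tfrac{2\pi}{5}.
\end{equation*}
Applying $\sin A\cos B=\frac{1}{2}[\sin(A+B)+\sin(A-B)]$ on the left (with $A+B=2\theta-\frac{\pi}{10}$ and $A-B=\frac{7\pi}{10}$) and $\sin A\sin B=\frac{1}{2}[\cos(A-B)-\cos(A+B)]$ on the right, the inequality collapses to
\begin{equation*}
\sin\Bigl(2\theta-\tfrac{\pi}{10}\Bigr)<\cos\tfrac{\pi}{10}-\cos\tfrac{7\pi}{10}-\sin\tfrac{7\pi}{10}.
\end{equation*}

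The right-hand side is now a constant, and the left-hand side is a single sine whose argument $2\theta-\frac{\pi}{10}$ runs over $[-\frac{\pi}{10},0]\subset(-\frac{\pi}{2},\frac{\pi}{2})$ as $\theta$ ranges over $[0,\frac{\pi}{20}]$. Hence $\sin(2\theta-\frac{\pi}{10})$ is increasing in $\theta$ and attains its maximum value $\sin 0=0$ at the right endpoint $\theta=\frac{\pi}{20}$. It therefore suffices to verify the single numerical inequality $0<\cos\frac{\pi}{10}-\cos\frac{7\pi}{10}-\sin\frac{7\pi}{10}$; using $\sin\frac{7\pi}{10}=\sin\frac{3\pi}{10}$ and $\cos\frac{7\pi}{10}=-\cos\frac{3\pi}{10}$ this rewrites as $\cos\frac{\pi}{10}+\cos\frac{3\pi}{10}>\sin\frac{3\pi}{10}$, whose right-hand value is $\approx 0.73>0$.

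The only genuine point of care — the analogue of the endpoint bookkeeping in \ref{prop:A5} and \ref{prop:A6} — is choosing the correct endpoint: because the isolated sine is \emph{increasing} while we seek an \emph{upper} bound on it, the binding case is $\theta=\frac{\pi}{20}$ rather than $\theta=0$. Everything else is a routine product-to-sum manipulation together with the elementary evaluation of a fixed trigonometric constant, so I do not expect any real obstacle.
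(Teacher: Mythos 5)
Your proposal is correct and follows essentially the same route as the paper: clear the positive denominator, apply the product-to-sum identity to reduce the claim to $\sin(2\theta-\frac{\pi}{10})<2\sin\frac{3\pi}{10}\sin\frac{2\pi}{5}-\sin\frac{7\pi}{10}$, note the left side is increasing with maximum $\sin 0=0$ at $\theta=\frac{\pi}{20}$, and check the constant on the right is positive ($\approx 0.73$). Your extra expansion of the right-hand constant via $\sin A\sin B=\frac{1}{2}[\cos(A-B)-\cos(A+B)]$ is a cosmetic difference only.
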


\begin{proof}
It is equivalent to
\begin{equation}
\begin{split}
& \sin(\frac{3\pi}{10}+\theta)\cos(\theta-\frac{2\pi}{5})<\sin\frac{3\pi}{10}\sin\frac{2\pi}{5}\\
\Leftrightarrow & \sin(2\theta-\frac{\pi}{10})+\sin\frac{7\pi}{10}<2\sin\frac{3\pi}{10}\sin\frac{2\pi}{5}\\
\Leftrightarrow & \sin(2\theta-\frac{\pi}{10})<2\sin\frac{3\pi}{10}\sin\frac{2\pi}{5}-\sin\frac{7\pi}{10}
\end{split}
\end{equation}
Since $\sin(2\theta-\frac{\pi}{10})$ is increasing for $0\leq\theta\leq\frac{\pi}{20}$, and
\begin{equation}
0<2\sin\frac{3\pi}{10}\sin\frac{2\pi}{5}-\sin\frac{7\pi}{10},
\end{equation}
the inequality holds for $0\leq\theta\leq\frac{\pi}{20}$.
\end{proof}

\begin{prop}
	\begin{equation}
	\cos(\theta+\frac{2\pi}{5})<\frac{\sin\frac{2\pi}{5}}{\cos\theta}
	\end{equation}
	for $0\leq\theta\leq\frac{\pi}{20}$.
	\label{prop:A8}
\end{prop}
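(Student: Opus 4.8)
The plan is to follow exactly the template already used in Propositions \ref{prop:A5}, \ref{prop:A6} and \ref{prop:A7}: clear the denominator, collapse the resulting product of trigonometric terms into a single cosine via a product-to-sum identity, and then exploit monotonicity on the interval $[0,\frac{\pi}{20}]$ to reduce the whole claim to one numerical inequality between constants. First I would note that on $0\leq\theta\leq\frac{\pi}{20}$ we have $\cos\theta>0$, so the assertion is equivalent to
\begin{equation*}
\cos\theta\cos\left(\theta+\frac{2\pi}{5}\right)<\sin\frac{2\pi}{5}.
\end{equation*}
Applying $2\cos A\cos B=\cos(A-B)+\cos(A+B)$ with $A=\theta$ and $B=\theta+\frac{2\pi}{5}$ turns the left-hand side into $\tfrac{1}{2}\cos\frac{2\pi}{5}+\tfrac{1}{2}\cos(2\theta+\frac{2\pi}{5})$, so the inequality becomes
\begin{equation*}
\cos\left(2\theta+\frac{2\pi}{5}\right)<2\sin\frac{2\pi}{5}-\cos\frac{2\pi}{5}.
\end{equation*}

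Next I would establish the monotonicity that makes this easy. As $\theta$ runs over $[0,\frac{\pi}{20}]$, the argument $2\theta+\frac{2\pi}{5}$ runs over $[\frac{2\pi}{5},\frac{\pi}{2}]$, which lies inside $[0,\pi]$; hence $\cos(2\theta+\frac{2\pi}{5})$ is decreasing in $\theta$ and attains its maximum at the left endpoint $\theta=0$. It therefore suffices to verify the displayed inequality at $\theta=0$, namely
\begin{equation*}
\cos\frac{2\pi}{5}<2\sin\frac{2\pi}{5}-\cos\frac{2\pi}{5},
\end{equation*}
which rearranges to the constant inequality $\sin\frac{2\pi}{5}-\cos\frac{2\pi}{5}>0$. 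This holds trivially since $\frac{2\pi}{5}$ exceeds $\frac{\pi}{4}$, so $\tan\frac{2\pi}{5}>1$.

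I do not expect any genuine obstacle here: the argument is a routine reduction followed by a single-endpoint check, mirroring the neighbouring propositions. The only point requiring a word of care is confirming that $2\theta+\frac{2\pi}{5}$ stays within a range on which cosine is monotone decreasing, so that the supremum is indeed taken at $\theta=0$; once that is noted, the remaining inequality $\sin\frac{2\pi}{5}>\cos\frac{2\pi}{5}$ is immediate and closes the proof.
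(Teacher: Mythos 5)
Your proof is correct and follows exactly the same route as the paper: clear the denominator, apply the product-to-sum identity to reduce to $\cos(2\theta+\frac{2\pi}{5})<2\sin\frac{2\pi}{5}-\cos\frac{2\pi}{5}$, and use monotonicity of cosine on the relevant range to reduce to the endpoint check $\sin\frac{2\pi}{5}>\cos\frac{2\pi}{5}$. Your write-up is in fact slightly more explicit than the paper's, since you justify the final constant inequality via $\frac{2\pi}{5}>\frac{\pi}{4}$ rather than merely asserting it.
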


\begin{proof}
It is equivalent to
\begin{equation}
\begin{split}
& \cos\theta\cos(\theta+\frac{2\pi}{5})<\sin\frac{2\pi}{5}\\
\Leftrightarrow & \cos(2\theta+\frac{2\pi}{5})+\cos\frac{2\pi}{5}<2\sin\frac{2\pi}{5}\\
\Leftrightarrow & \cos(2\theta+\frac{2\pi}{5})<2\sin\frac{2\pi}{5}-\cos\frac{2\pi}{5}.
\end{split}
\end{equation}
Since $\cos(2\theta+\frac{2\pi}{5})$ is decreasing for $0\leq\theta\leq\frac{\pi}{20}$, and
\begin{equation}
\cos\frac{2\pi}{5}<2\sin\frac{2\pi}{5}-\cos\frac{2\pi}{5},
\end{equation}
the inequality holds for $0\leq\theta\leq\frac{\pi}{20}$.
\end{proof}

\end{document}